\title{
\vspace{-1cm}\semiLARGE
Annihilating class groups in $p$-elementary extensions
}
\date{}
\author{Dominik Bullach \and Daniel Mac{\'i}as Castillo}
\begin{document}

\maketitle

\let\thefootnote\relax\footnotetext{2020 {\em Mathematics Subject Classification.} 11R29, 11R42. 
\textit{Key words:} Class groups, Dirichlet $L$-series.
}

\begin{abstract}\vspace{-1cm}
    We derive new cases of conjectures of Rubin and of Burns--Kurihara--Sano concerning derivatives of Dirichlet $L$-series at $s = 0$ in $p$-elementary extensions of number fields for arbitrary prime numbers $p$.  
    In naturally arising examples of such extensions one therefore obtains annihilators of class groups from $S$-truncated Dirichlet $L$-series for `large-enough' sets of places $S$.  
\end{abstract}


\section{Introduction and statement of main results}\label{1}

Stark's Conjecture predicts a description for the leading term of a general Artin $L$-series at $s = 0$ up to an unspecified rational factor. 
Formulating an integral refinement of this conjecture turned out to be a delicate task that Stark himself, in \cite{Stark4}, only found a solution to in the case that the order of vanishing of the $L$-series at $s = 0$ is one. Initial generalisations to higher orders of vanishing, for example the `question' of Stark in \cite{Tangedal, grant} or a conjecture of Sands \cite[Conj.~2.0]{Sands87}, were subsequently shown to not hold in general by Rubin \cite[\S\,4]{Rub96} and Popescu \cite{Popescu07}. Instead, Rubin proposed what is now commonly referred to as the `Rubin--Stark Conjecture' in \textit{loc.\@ cit}.\\  
To state the Rubin--Stark Conjecture, we fix a finite abelian extension of number fields $K / k$ with Galois group $G \coloneqq \gal{K}{k}$ and, following Rubin \cite[Hyp.~2.1]{Rub96}, a triple $(S, V, T)$ of the following form:
\begin{enumerate}[label=(H\arabic*)]
\item $S$ is a finite set of places of $k$ which contains both  the set $S_\infty$ of infinite places of $k$ and the set of places of $k$ that ramify in $K$, 
\item $V \subsetneq S$ is a proper subset comprising places which split completely in $K / k$,
\item $T$ is a finite set of places of $k$ which is disjoint from $S$ and such that the $(S_K, T_K)$-unit group $\mathcal{O}^\times_{K, S, T} \coloneqq \{ a \in K^\times \mid \ord_w (a) = 0 \text{ if } w \not \in S_K, \ord_w (a - 1) > 0 \text{ if } w \in T_K \}$ is $\Z$-torsion free. (Here $S_K$ and $T_K$ denote the sets of places of $K$ that lie above those in $S$ and $T$, respectively, and $\ord_w$ is the normalised valuation attached to $w$.)
\end{enumerate}
We shall refer to such a triple $(S, V, T)$ as a `Rubin datum' for $K / k$. For any Rubin datum $(S, V, T)$ and complex-valued character $\chi$ in $\widehat{G} \coloneqq \Hom_\Z (G, \C^\times)$, the ($S$-truncated, $T$-modified) Dirichlet $L$-series 
\[
L_{k, S, T} (\chi, s) \coloneqq \prod_{v \in T} (1 - \chi (\Frob_v)\NN v^{1 - s} ) \cdot \prod_{v \not \in S} (1 - \chi (\Frob_v) \mathrm{N}v^{-s})^{-1} 
\quad \text{ if } \mathrm{Re}(s) > 1
\]
is well-known to admit a meromorphic continuation to $\C$ that is holomorphic and of order of vanishing at least $|V|$ at $s = 0$ (cf.\@ \cite[Ch.\@ I, Prop.\@ 3.4]{Tate}). We may therefore define the ($|V|$-th order) `Stickelberger element' 
\[
\theta_{K / k, S, T}^{(|V|)} (0) \coloneqq \sum_{\chi \in \widehat{G}} \big( \lim_{s \to 0} s^{-|V|} L_{k, S, T} (\chi^{-1}, s) \big) \cdot e_{\chi}
\]
with $e_\chi \coloneqq |G|^{-1} \sum_{\sigma \in G} \chi (\sigma)^{-1} \sigma$ the usual primitive orthogonal idempotent in $\C [G]$ associated with $\chi$. 
In addition, we define $X_{K, S} \subseteq Y_{K, S} \coloneqq \bigoplus_{w \in S_K} \Z w$ to be the $\Z [G]$-submodule of elements whose coefficients sum to zero, and denote the Dirichlet regulator isomorphism by
\begin{equation} \label{dirichlet regulator}
\lambda_{K, S} \: \R \otimes_\Z \cO_{K, S}^\times \stackrel{\simeq}{\longrightarrow} \R \otimes_\Z X_{K, S},
\quad 
x \otimes a \mapsto - x \sum_{w \in S_K} \log | a|_w \cdot w.
\end{equation} 

The Rubin--Stark Conjecture \cite[Conj.~B']{Rub96} now predicts, via the reinterpretation given in Lemma \ref{alternative definition RS image lem} below, that for every homomorphism of $\ZZ[G]$-modules $f \: \cO_{K, S, T}^\times \to X_{K, S}$ one has 
\[
\theta_{K / k, S, T}^{(|V|)} (0) \cdot {\det}_{\R [G]} (f_\RR\circ \lambda_{K, S}^{-1}) 
\in \Z [G].
\]
Here $f_\RR$ abbreviates the map from $\R \otimes_\Z \cO_{K, S}^\times=\R \otimes_\Z \cO_{K, S,T}^\times$ to $\RR\otimes_\ZZ X_{K,S}$ that is induced by $f$ via extension of scalars to $\R$.

In this note we shall prove a refinement of the above inclusion, and hence also new cases of the Rubin--Stark Conjecture, in certain situations in which Stark's rational conjecture is known due to a result of Tate \cite[Ch.~II, Thm.~6.8]{Tate}. This refinement is most conveniently stated in terms of the `integral dual Selmer group' $\Sel_{K, S, T}$ that is defined
by Burns, Kurihara, and Sano \cite[Def.\@ 2.1]{BKS} as the cokernel of the canonical map 
\[
 \prod_{w \not \in S_K \cup T_K} \Z \to \Hom_\Z ( K_T^\times, \Z ), 
\quad (x_w)_w \mapsto \big \{ a \mapsto \sum_{w} x_w \ord_w (a) \big \}
\]
with $K_T^\times \coloneqq \{ a \in K^\times \mid \ord_w (a - 1) > 0 \text{ if } w \in T_K\}$, and which fits into a canonical exact sequence of $G$-modules
\begin{cdiagram}
    0 \arrow{r} & \Hom_\Z ( \Cl_{K, S, T}, \Q / \Z) \arrow{r} & \Sel_{K, S, T} \arrow{r} & \Hom_\Z ( \cO_{K, S, T}^\times, \Z ) \arrow{r} & 0.
\end{cdiagram}%
Here $\Cl_{K, S, T}$ denotes the Pontryagin dual of the $S_K$-ray class group $\Cl_{K, S, T}$ of $K$ mod $T_K$, defined as the quotient of the group of fractional ideals of $\bigO_{K, S}$ coprime to $T_K$ by the subgroup of principal ideals with a generator congruent to 1 modulo all $w \in T_K$, and all duals are endowed with the contragredient $G$-action.\\
We can now state the first main result of this note, in which we write $\Fitt^n_{\Z [G]} (M)$ for the $n$-th Fitting ideal of a finitely presented $\Z [G]$-module $M$ (for more details on Fitting ideals see, for example, \cite[\S\,3.1]{northcott} or \cite{NickelFitting}) and, given a subset $I$ of $\C [G]$, we denote by $I^\#$ the image of $I$ under the involution of $\C [G]$ defined by sending each $\sigma \in G$ to $\sigma^{-1}$. 

\begin{thm} \label{result 1}
Let $K / k$ be an extension of number fields of one of the following forms: 
\begin{enumerate}[label=(\roman*)]
    \item There exists a prime-power $q$ and a subfield $\kappa$ of $k$ such that $K / \kappa$ is a Galois extension with Galois group isomorphic to the group $\Aff (q)$ of affine transformations of the field $\mathbb{F}_q$ with $q$ elements, and $G = \gal{K}{k}$ is the unique subgroup of order $q$ of $\gal{K}{\kappa}$. 
    \item $K / k$ is a biquadratic extension.
\end{enumerate}
For any Rubin datum $(S, V, T)$ for $K / k$ with $|S| > |V| + 1$ one then has the equality
\begin{equation} \label{image RS}
   \big \{ 
    \theta_{K / k, S, T}^{(|V|)} (0) \cdot {\det}_{\R [G]} ( f_\RR\circ \lambda_{K, S}^{-1})  
    \mid f \in \Hom_{\Z [G]} ( \cO_{K, S, T}^\times, X_{K, S}) \big \} 
    = \Fitt^{|V|}_{\Z [G]} ( \Sel_{K, S, T})^\#,
\end{equation}
as conjectured by Burns, Kurihara, and Sano in \cite[Conj.\@ 7.3]{BKS}. 
\end{thm}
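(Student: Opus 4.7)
My plan is to prove (\ref{image RS}) in two steps: first establish the relevant instance of the Rubin--Stark Conjecture for $K/k$, and then upgrade this containment to the Fitting ideal identity by exploiting the elementary abelian structure of $G$.

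For case (ii), every character of $G \cong (\Z/2)^2$ is $\Q$-rational, so Tate's theorem \cite[Ch.~II, Thm.~6.8]{Tate} yields Stark's rational conjecture. A descent from the three quadratic subextensions of $K/k$, where Rubin--Stark reduces via the analytic class number formula to the classical Stark formula of \cite{Stark4}, should then furnish the integral refinement. For case (i), I would pass to the enveloping extension $K/\kappa$ with Galois group $\Aff(q) = \mathbb{F}_q \rtimes \mathbb{F}_q^\times$, whose irreducible characters consist of the $q-1$ characters inflated from the cyclic quotient $\mathbb{F}_q^\times$ together with a single monomial character of degree $q-1$ induced from any non-trivial character of the translation subgroup $G$. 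All are monomial, so Tate's theorem applies and gives Stark's rational conjecture for $K/\kappa$; via the Artin formalism this yields the analogous statement for characters of $G$, and a descent argument along $G \hookrightarrow \Aff(q)$ should then furnish the integral Rubin--Stark identity for $K/k$.

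With Rubin--Stark in hand, the left-hand side of (\ref{image RS}) is known to lie in $\Z[G]$, and the remaining task is to identify it with the Fitting ideal. I would work prime-by-prime. At primes $\ell \neq p$, the group algebra $\Z_\ell[G]$ is a product of local maximal orders and the defining exact sequence of $\Sel_{K,S,T}$ allows a direct Fitting ideal computation that matches the Rubin--Stark image. The principal obstacle lies at the prime $p$, where $\Z_p[G]$ is a local, but non-regular, ring. Here the elementary abelian structure of $G$ furnishes an explicit presentation of $\Z_p[G]$, and — combined with the hypothesis $|S| > |V| + 1$, which guarantees sufficient flexibility in the choice of $f$ — should reduce the identification to an explicit comparison between generators of $\Fitt^{|V|}_{\Z_p[G]}(\Sel_{K,S,T} \otimes \Z_p)^\#$ and the values $\theta_{K/k,S,T}^{(|V|)}(0) \cdot {\det}_{\R[G]}(f_\R \circ \lambda_{K,S}^{-1})$. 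Constructing an $f \in \Hom_{\Z[G]}(\cO^\times_{K,S,T}, X_{K,S})$ that realises a prescribed Fitting ideal generator at $p$ is the anticipated main technical difficulty.
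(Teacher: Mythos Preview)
Your proposal has a genuine structural gap. You frame the argument as a two-step process---first establish the Rubin--Stark inclusion, then upgrade it to the Fitting ideal equality---but the second step, as you describe it, is not a viable strategy. Constructing explicit homomorphisms $f$ to realise prescribed generators of $\Fitt^{|V|}_{\Z_p[G]}(\Sel_{K,S,T})^\#$ would require concrete control over both $\Sel_{K,S,T}$ and $\cO^\times_{K,S,T}$ that is simply not available; there is no mechanism for matching the analytic quantity $\theta^{(|V|)}_{K/k,S,T}(0)\cdot\det_{\R[G]}(f_\R\circ\lambda_{K,S}^{-1})$ against an abstractly presented Fitting ideal generator short of something essentially equivalent to the equivariant Tamagawa number conjecture itself. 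Your remark that $\Z_\ell[G]$ is a product of maximal orders for $\ell\neq p$ is correct but does not help: the difficulty is analytic, not purely algebraic, and persists at every prime.

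The paper proceeds entirely differently and proves the \emph{equality} directly, without isolating the inclusion as a separate step. The key object is the zeta element $z_{K/k,S,T}\in\R\otimes_\Z\Det_{\Z[G]}(C^\bullet_{K,S,T})$, where $C^\bullet_{K,S,T}$ is the Weil-\'etale cohomology complex of \cite{BKS}. A general argument from \cite[Thm.~7.5]{BKS} shows that if $\fz^{(\ell)}$ generates a submodule of prime-to-$\ell$ index in $\Det_{\Z[G]}(C^\bullet_{K,S,T})$, then the image of a projection $\Theta^V_{K/k,S}(\fz^{(\ell)})$ computes $\Z_{(\ell)}\otimes\Fitt^{|V|}_{\Z[G]}(\Sel_{K,S,T})^\#$; and one has $\Theta^V_{K/k,S}(z_{K/k,S,T})=\varepsilon^V_{K/k,S,T}$. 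The problem therefore reduces to showing, for each prime $\ell$, that the scalar $\lambda^{(\ell)}\in\Q[G]$ with $z_{K/k,S,T}=\lambda^{(\ell)}\fz^{(\ell)}$ becomes a unit after multiplication by the idempotent $e_{K,S,V}$. In case~(i) the decisive point---which your sketch gestures at but does not exploit---is that $\Aff(q)$ has a \emph{single} non-linear irreducible character, forcing $\lambda^{(\ell)}=a\,e_{\bm 1}+b(1-e_{\bm 1})$ with $a,b\in\Q$; the analytic class number formulas for $k$ (via Galois descent of the determinant) and for $K$ (via an $\NN$-semilinear norm map on determinants) then give $a\in\Z_{(\ell)}^\times$ and $ab^{q-1}\in\Z_{(\ell)}^\times$, hence $b\in\Z_{(\ell)}^\times$. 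The hypothesis $|S|>|V|+1$ enters only to guarantee $e_{\bm 1}\cdot e_{K,S,V}=0$, so that $\lambda^{(\ell)}e_{K,S,V}=b\,e_{K,S,V}$. Case~(ii) is parallel, with an extra $2$-adic congruence argument to handle the sign ambiguity at the trivial character.
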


The proof of this result crucially relies on the analytic class number formula, and will be carried out in \S\,\ref{proof of result 1 section}.\\
We remark that Johnston and Nickel \cite[Thm.\@ 7.6]{JoNi16} have previously studied a conjecture of Burns (from \cite{Bur11}) regarding the annihilation of class groups in extensions $K / \kappa$ as in (i) above if $k / \Q$ is abelian.  

\begin{bsp} \label{example affp extension}
Fix a prime number $p$ and let $\zeta_p$ be a primitive $p$-th root of unity in an algebraic closure of $\Q$. Let $\kappa$ be a number field with the property that $\kappa \cap \Q (\zeta_p) = \Q$. If we pick any element $a \in \kappa^\times$ that is not a $p$-th power in $\kappa$, then it is also not a $p$-th power in $k \coloneqq \kappa (\zeta_p)$ and $K \coloneqq k ( \sqrt[p]{a})$ is an extension of the form (i) with $q = p$.  
\end{bsp}

To state our second main result, we fix a prime number $p$ and
write $\Omega$ for the set of subextensions $L/k$ of $K/k$ that have degree equal to $p$.

\begin{thm} \label{result 2}
Let $K / k$ be a $p$-elementary extension of number fields and
    fix a Rubin datum $(S, V, T)$ for $K / k$ that satisfies 
\[
|S| \geq \max \{ |V| + 2, |V| - s_p + (p - 1)(m - 1) + 3\},
\]
where $s_p \coloneqq \dim_{\mathbb{F}_p} (\Cl_{k, S, T} \otimes_\Z \mathbb{F}_p)$ denotes the $p$-rank of the $S$-ray class group mod $T$ of $k$. 

If the equality (\ref{image RS}) holds for all extensions $L / k$ in $\Omega$, then one has that 
\[\big \{ 
    \theta_{K / k, S, T}^{(|V|)} (0) \cdot {\det}_{\R [G]} ( f_\RR\circ \lambda_{K, S}^{-1} )  
    \mid f \in \Hom_{\Z [G]} ( \cO_{K, S, T}^\times, X_{K, S}) \big \}\,\,\subseteq\,\, \Ann_{\Z [G]} (\Cl_{K, S, T}).\]
\end{thm}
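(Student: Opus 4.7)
My plan is to prove the stated containment in three steps.

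First, I would translate the hypothesis into an annihilator statement for each $L \in \Omega$. Writing $G_L \coloneqq \gal{L}{k}$, the condition $|S| > |V|+1$ together with the defining exact sequence for $\Sel_{L,S,T}$ implies, by standard relations between higher Fitting ideals and annihilators, that $\Fitt^{|V|}_{\Z[G_L]}(\Sel_{L,S,T})$ lies in the annihilator of the finite torsion submodule $\Hom_\Z(\Cl_{L,S,T}, \Q / \Z)$ of $\Sel_{L,S,T}$. Applying the involution $\#$, which interchanges annihilators of a finite $\Z[G_L]$-module and of its Pontryagin dual, the assumed equality (\ref{image RS}) for $L/k$ then yields
\[
\mathrm{RS}_L \,\subseteq\, \Ann_{\Z[G_L]}(\Cl_{L,S,T}),
\]
where $\mathrm{RS}_L$ denotes the set on the left-hand side of (\ref{image RS}) for $L/k$.

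Second, I would establish the functoriality $\pi_L(\mathrm{RS}_K) \subseteq \mathrm{RS}_L$ under the natural projection $\pi_L \colon \Z[G] \twoheadrightarrow \Z[G_L]$, for each $L \in \Omega$. Indeed, since Dirichlet $L$-series only depend on characters, $\pi_L$ sends $\theta_{K/k,S,T}^{(|V|)}(0)$ to $\theta_{L/k,S,T}^{(|V|)}(0)$; and restriction of any $f \in \Hom_{\Z[G]}(\cO_{K,S,T}^\times, X_{K,S})$ to $\cO_{L,S,T}^\times = (\cO_{K,S,T}^\times)^{\gal{K}{L}}$ yields, via the identification $X_{K,S}^{\gal{K}{L}} \cong X_{L,S}$, an element $\bar f \in \Hom_{\Z[G_L]}(\cO_{L,S,T}^\times, X_{L,S})$ whose regulator determinant satisfies $\pi_L(\det_{\R [G]}(f_\R \circ \lambda_{K,S}^{-1})) = \det_{\R [G_L]}(\bar f_\R \circ \lambda_{L,S}^{-1})$. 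Combining with the first step, every $x \in \mathrm{RS}_K$ satisfies $\pi_L(x) \in \Ann_{\Z[G_L]}(\Cl_{L,S,T})$ for every $L \in \Omega$.

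The key step is then a descent argument: any $x \in \Z[G]$ with the above property already lies in $\Ann_{\Z[G]}(\Cl_{K,S,T})$. The plan here is induction on $m$: fix $L_0 \in \Omega$ so that $\gal{K}{L_0} \cong (\Z / p)^{m-1}$ and apply the inductive hypothesis to $K/L_0$ -- whose intermediate degree-$p$ fields form a natural subfamily of $\Omega$ -- in order to produce an annihilator of $\Cl_{K,S,T}$ coming from the $K/L_0$ data; annihilation of $\Cl_{L_0,S,T}$ by $\pi_{L_0}(x)$ then controls the remaining cyclic piece. The technical engine at each step is the Chevalley-style genus exact sequence relating $\Cl_{K,S,T}^{\gal{K}{L}}$ to $\Cl_{L,S,T}$, to the places of $L$ ramifying in $K$, and to a global unit index. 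The numerical hypothesis on $|S|$ is calibrated precisely so that the error terms appearing in these sequences -- bounded respectively by $(p-1)(m-1)$ from ramification and by $s_p$ from capitulation at the base field level -- can be absorbed. This descent is the main obstacle, requiring careful cohomological bookkeeping to match the stated bound.
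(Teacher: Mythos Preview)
The descent claimed in your third step is the crux, and it does not hold. The assertion that any $x \in \Z[G]$ whose projections $\pi_L(x)$ annihilate $\Cl_{L,S,T}$ for every $L \in \Omega$ must already annihilate $\Cl_{K,S,T}$ is false in general: if $\Cl_{L,S,T}$ vanishes for every $L \in \Omega$ while $\Cl_{K,S,T}$ does not, then $x = 1$ is a counterexample. A Chevalley-type genus formula controls only the \textit{fixed part} $\Cl_{K,S,T}^{\gal{K}{L}}$ in terms of $\Cl_{L,S,T}$ and ramification; it gives no handle on the full $G$-module $\Cl_{K,S,T}$. Your inductive setup is also mis-specified: the degree-$p$ subextensions of $K/L_0$ have degree $p^2$ over $k$ and so are not members of $\Omega$, and the hypothesis of the theorem supplies (\ref{image RS}) only for extensions in $\Omega$, not for degree-$p$ extensions of $L_0$. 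Finally, you never establish that $\mathrm{RS}_K \subseteq \Z[G]$, which is needed before your Step~3 can be applied at all; this is the Rubin--Stark Conjecture for $K/k$ and is part of what has to be proved.

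The paper's argument is organised quite differently and does not attempt to descend annihilation from the $\Cl_{L,S,T}$. The hypothesis (\ref{image RS}) for each $L \in \Omega$ is used, not to get annihilators at level $L$, but to identify $\im(\varepsilon^V_{L/k,S,T})$ with a Fitting ideal of a transpose Selmer group. A direct computation, using that every non-split place in $S \setminus V$ has full decomposition group in $L/k$ together with the congruence $(\sigma-1)^p \equiv 0 \pmod p$, shows this Fitting ideal lies in $p^{m-1} I_{G_L}^{1+c}$; the numerical bound on $|S|$ is precisely what makes the exponent $m-1$ come out. Combined with an explicit identity expressing $\varepsilon^V_{K/k,S,T}$ as $p^{-(m-1)}$ times a sum of inflations of the $\varepsilon^V_{L/k,S,T}$, this yields $\varepsilon^V_{K/k,S,T} \in I_G \cdot \bidual^{|V|}_{\Z[G]} \cO_{K,S,T}^\times$. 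The annihilation of $\Cl_{K,S,T}$ is then obtained by a Stark-systems argument entirely at level $K$: one enlarges $S$ by a Cebotarev-chosen set $W$ of primes split in $K$ whose Frobenii generate the relevant piece of $\gal{H_K}{K}$, applies the above to the datum $(S\cup W,\,V\cup W,\,T)$, and recovers $\varepsilon^V_{K/k,S,T}$ by contracting along the valuation maps at $W$, landing in $I_G \cdot \Ann_{\Z[G]}(A)$ for a submodule $A \subseteq \Cl_{K,S,T}$ whose cokernel carries trivial $G$-action and is therefore already killed by $I_G$.
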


To prove Theorem \ref{result 2}, we first show in Lemma \ref{main-result-2-RS-part} that, under the stated hypotheses, the Rubin--Stark Conjecture for $K / k$ is implied by the validity of (\ref{image RS}) for all degree-$p$ subfields. The annihilation statement in Theorem \ref{result 2} is then deduced from this by varying the Rubin datum in combination with Cebotarev's Density Theorem, as in the theory of `Stark systems' (see, for example, \cite[\S\,4]{bss}). Although this latter aspect of the argument is of a general nature, we prefer to focus on the concrete situation at hand in this note and to discuss the general formalism elsewhere.

\begin{rk}
\begin{liste}
\item If $p = 2$, then Theorem \ref{result 2} is unconditional and recovers results of Sands \cite[Thm.\@ 2.2]{Sands} on the Rubin--Stark Conjecture and of Sands \cite[Main Thm.]{Sands12} and the second author \cite[Thm.~1.4]{MaciasCastillo12} on the annihilation of class groups. 
\item If, in the situation of Theorem \ref{result 2}, the stronger bound 
    \[
|S| \geq \max \{ |V| + 2, |V| + (p - 1)(m - 1) + 2\},
    \]
is valid, then the proof of  Theorem \ref{result 2} shows that one has the finer inclusion
\[
\im (\varepsilon^V_{K / k, S, T}) \subseteq \Char_{\Z [G]} ( \Cl_{K, V, T})
  \]
with $\Char_{\Z [G]} ( \Cl_{K, V, T})$ the `characteristic ideal' of $\Cl_{K, V, T}$ defined by Greither--Sakamoto (see \cite[\S\,5.2]{Greither04}, \cite[App.\@ C]{Sakamoto20}). To make this a little more explicit, we note that one both has an inclusion $\Char_{\Z [G]} ( \Cl_{K, V, T}) \subseteq \Ann_{\Z [G]} ( \Cl_{K, V, T})$ and, for every prime number $l \neq p$, an identification 
\[
\Char_{\Z [G]} ( \Cl_{K, V, T}) \otimes_{\Z} \Z_l = \Fitt^0_{\Z [G]} (\Cl_{K, V, T}) \otimes_{\Z} \Z_l.
\]
\end{liste}
\end{rk}

Note that results on the Rubin--Stark Conjecture in the literature outside the classical cases where at most one archimedean place of $k$ splits in $K$ or the degree $[K : k]$ is at most two are extremely sparse (see Remark \ref{known cases rubin--stark} for a full list of known cases).
By combining Theorems \ref{result 1} and \ref{result 2} with Example \ref{example affp extension}, we now obtain the following method to systematically produce new examples in which the conjecture is valid.   

\begin{cor}
Let $p$ be a prime number, $\zeta_p$ a primitive $p$-th root of unity, and $\kappa$ a number field with the property that $\kappa \cap \Q (\mu_p) = \Q$. Let $a_1, \dots, a_m$ be elements of $\kappa$ that are 
$\mathbb{F}_p$-linearly independent in $\kappa^\times / (\kappa^\times)^p$, and set $k \coloneqq \kappa (\mu_p)$ and $K \coloneqq k (\sqrt[p]{a_1}, \dots, \sqrt[p]{a_m})$. If $(S, V, T)$ is a Rubin datum for $K / k$ with 
\[
|S| \geq \max \{ |V| + 2, |V| - s_p + (p - 1)(m - 1) + 3\},
\]
then for every $\Z [G]$-module homomorphism $f \: \cO_{K, S, T}^\times \to X_{K, S}$ one has that 
\[
\theta_{K / k, S, T}^{(|V|)} (0) \cdot {\det}_{\R [G]} (f_\RR\circ \lambda_{K, S}^{-1})
\in \Ann_{\Z [G]} (\Cl_{K, S, T}).
\]
In particular, the Rubin--Stark Conjecture holds for $(S, V, T)$ and $K / k$. 
\end{cor}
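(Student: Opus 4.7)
The plan is to deduce the corollary directly from a combination of Theorems \ref{result 1} and \ref{result 2}. Since the lower bound on $|S|$ assumed in the corollary is precisely the hypothesis imposed by Theorem \ref{result 2} on the $p$-elementary extension $K/k$, it suffices to verify that the equality (\ref{image RS}) of Theorem \ref{result 1} holds for every degree-$p$ subextension $L/k$ of $K/k$, and to then invoke Theorem \ref{result 2}.

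To carry out this verification, I would first use Kummer theory to describe the elements of $\Omega$ explicitly: each such $L$ has the form $L = k(\sqrt[p]{b})$ for some $b \in \kappa^\times$ whose class generates a one-dimensional $\mathbb{F}_p$-subspace of the span of the classes of $a_1, \dots, a_m$ inside $\kappa^\times/(\kappa^\times)^p$. The assumed $\mathbb{F}_p$-linear independence of the $a_i$ guarantees that any such $b$ is not a $p$-th power in $\kappa$, so that Example \ref{example affp extension} identifies $L$ as a Galois extension of $\kappa$ with $\gal{L}{\kappa} \cong \Aff(p)$ of the form (i) in Theorem \ref{result 1} (with $q = p$ and with $\gal{L}{k}$ the unique subgroup of order $p$). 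Theorem \ref{result 1} then delivers the equality (\ref{image RS}) for every $L \in \Omega$, and Theorem \ref{result 2} therefore yields the desired inclusion into $\Ann_{\Z [G]}(\Cl_{K,S,T})$.

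The Rubin--Stark Conjecture for $(S, V, T)$ and $K/k$ is then an immediate consequence: the annihilator ideal is by definition contained in $\Z [G]$, so every element of the form $\theta_{K/k,S,T}^{(|V|)}(0) \cdot {\det}_{\R [G]}(f_\R \circ \lambda_{K,S}^{-1})$ automatically lies in $\Z [G]$.

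I do not expect a serious obstacle in this argument; the only point that will require some care is the bookkeeping needed to ensure that the integer $m$ appearing in the corollary agrees with the exponent of the $p$-elementary group $G$ that enters the bound of Theorem \ref{result 2}. This amounts to checking $[K : k] = p^{m}$, which reduces to the assertion that the $\mathbb{F}_p$-linear independence of the $a_i$ in $\kappa^\times/(\kappa^\times)^p$ is inherited in $k^\times/(k^\times)^p$; this in turn follows from the linear disjointness of $\kappa(\sqrt[p]{a_1}, \dots, \sqrt[p]{a_m})$ and $k = \kappa(\mu_p)$ over $\kappa$ (which is ensured by the hypothesis $\kappa \cap \Q(\mu_p) = \Q$, via the same argument already alluded to in Example \ref{example affp extension}).
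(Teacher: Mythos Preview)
Your proposal is correct and takes essentially the same approach as the paper: check that each $L \in \Omega$ is of the form (i) in Theorem~\ref{result 1} via Example~\ref{example affp extension}, and then combine Theorems~\ref{result 1} and~\ref{result 2}. The only cosmetic difference is that the paper justifies the injectivity of $\kappa^\times/(\kappa^\times)^p \to k^\times/(k^\times)^p$ (and hence $[K:k]=p^m$) via the vanishing of $H^1(\gal{k}{\kappa},\mu_p)$ rather than by an appeal to linear disjointness.
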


\begin{proof}
    The kernel of the natural map $\kappa^\times / (\kappa^\times)^p \to k^\times / (k^\times)^p$ identifies with $H^1 (\gal{K}{\kappa}, \mu_p)$, and hence vanishes. It follows that $a_1, \dots, a_m$ generate an $\mathbb{F}_p$-subvectorspace of $k^\times / (k^\times)^p$ of dimension $m$. By Kummer theory, one therefore has that $[K : k] = p^m$ and so, noting that $\gal{K}{\kappa} \cong \mathrm{Aff}(p)$ because $\kappa \cap \Q (\mu_p) = \Q$, the result follows by combining Theorems \ref{result 1} and \ref{result 2}. 
\end{proof}

\paragraph{Acknowledgements}
The first author wishes to acknowledge the financial support of the Engineering and Physical Sciences Research Council [EP/W522429/1].
The second author acknowledges support for this article as part of Grants CEX2019-000904-S, PID2019-108936GB-C21 and PID2022-142024NB-I00 funded by MCIN/AEI/ 10.13039/501100011033.

\section{Rubin--Stark elements}

Let $(S, V, T)$ be a Rubin datum for the finite abelian extension of number fields $K / k$ and
fix a labelling $S = \{ v_0, \dots, v_{|S| - 1} \}$ such that $V = \{ v_1, \dots, v_{|V|} \}$ along with an extension $w_i$ of each place $v_i$ in $S$. 
The `Rubin--Stark element' $\varepsilon^V_{K / k, S, T}$ for $(S, V, T)$ is then the unique element of $\R \otimes_\Z \exprod^{|V|}_{\Z [G]} \cO^\times_{K, S}$ with the property that
\[
\big( \exprod^{|V|} \lambda_{K, S} \big) (\varepsilon^V_{K / k, S, T}) = \theta^{(|V|)}_{K / k, S, T} (0) \cdot \bigwedge_{1 \leq i \leq |V|} ( w_i - w_0) 
\]
with $\exprod^{|V|} \lambda_{K, S} \: \R \otimes_\Z \exprod^{|V|}_{\Z [G]} \cO^\times_{K, S} \stackrel{\simeq}{\longrightarrow} \R \otimes_\Z \exprod^{|V|}_{\Z [G]} X_{K, S}$ the isomorphism induced by (\ref{dirichlet regulator}).\\
We then define the `image' of $\varepsilon^V_{K / k, S, T}$ to be the $\Z [G]$-submodule of $\R [G]$ given by
\[
\im ( \varepsilon^V_{K / k, S, T} ) \coloneqq \big \{ F (\varepsilon^V_{K / k, S, T}) \mid F \in \exprod^{|V|}_{\Z [G]} \Hom_{\Z [G]}( \cO^\times_{K, S, T}, \Z [G]) \big \},
\]
where we have written $F (\varepsilon^V_{K / k, S, T})$ for the image of $(\varepsilon^V_{K / k, S, T}, F)$ under the determinant pairing
\begin{align*}
\big( \R \otimes_\Z \exprod^{|V|}_{\Z [G]} \cO^\times_{K, S}  \big)
\times 
\big( \R \otimes_\Z \exprod^{|V|}_{\Z [G]} \Hom_{\Z [G]}( \cO^\times_{K, S, T}, \Z [G]) \big) 
& \to \R [G], \\
(a_1 \wedge \dots \wedge a_{|V|}, f_1 \wedge \dots \wedge f_{|V|} ) 
& \mapsto \det ( f_i ( a_j))_{1 \leq i, j \leq |V|}.
\end{align*}

The following result allows us to reformulate the equality (\ref{image RS}) in terms of Rubin--Stark elements.  

\begin{lem} \label{alternative definition RS image lem}
For any Rubin datum $(S, V, T)$ for $K / k$, one has an equality 
\[
\im ( \varepsilon^V_{K / k, S, T} ) = \{ {\det}_{\R [G]} ( f_\RR \circ \lambda_{K, S}^{-1}) \cdot \theta^{|V|}_{K / k, S, T} (0) \mid f \in \Hom_{\Z [G]} ( \cO_{K, S, T}^\times, X_{K, S}) \}.
\]  
\end{lem}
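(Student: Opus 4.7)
The plan is to carry out a direct computation that translates both sides of the claimed equality into a common form. The starting point is the characterising property
\[
\bigl(\exprod^{|V|} \lambda_{K,S}\bigr)(\varepsilon^V_{K/k,S,T}) \, = \, \theta^{(|V|)}_{K/k,S,T}(0) \cdot \bigwedge_{1 \leq i \leq |V|}(w_i - w_0).
\]
Combined with the definition of the determinant pairing, this shows that any pure wedge $F = f_1 \wedge \cdots \wedge f_{|V|}$ in $\exprod^{|V|}_{\Z[G]}\Hom_{\Z[G]}(\cO^\times_{K,S,T}, \Z[G])$ satisfies, upon factoring through $\exprod^{|V|} \lambda_{K,S}$, the key identity
\[
F(\varepsilon^V_{K/k,S,T}) \, = \, \theta^{(|V|)}_{K/k,S,T}(0) \cdot \det\bigl((f_i \circ \lambda_{K,S}^{-1})(w_j - w_0)\bigr)_{1 \leq i,j \leq |V|}.
\]
The crucial observation needed to match these with single-map determinants is that $\theta^{(|V|)}_{K/k,S,T}(0)$ is supported on the central idempotents $e_\chi$ attached to those $\chi \in \widehat{G}$ for which the order of vanishing of $L_{k,S,T}(\chi^{-1}, s)$ at $s=0$ equals $|V|$ exactly; for every such $\chi$ the family $\{e_\chi(w_i - w_0)\}_{i=1}^{|V|}$ forms a $\C$-basis of $e_\chi (\R \otimes_\Z X_{K,S})$. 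Both facts are consequences of the classical formula for the order of vanishing of Dirichlet $L$-series at $s = 0$ together with the hypotheses that each $v_i \in V$ splits completely in $K/k$ and that $v_0 \notin V$.

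For the inclusion $\supseteq$, given $f \in \Hom_{\Z[G]}(\cO^\times_{K,S,T}, X_{K,S})$ I would use the complete splitting of each $v_i \in V$ to define the $\Z[G]$-linear projection $\phi_i \: Y_{K,S} \to \Z[G]$ sending $\sigma w_i$ to $\sigma$ and $w$ to $0$ for all $w$ not above $v_i$. Since $v_0 \notin V$ we have $\phi_i(w_j - w_0) = \delta_{ij}$ for $1 \leq i,j \leq |V|$, so setting $f_i := \phi_i \circ f$ and $F := f_1 \wedge \cdots \wedge f_{|V|}$, a character-by-character check shows that the matrix $\bigl((f_i \circ \lambda_{K,S}^{-1})(w_j - w_0)\bigr)_{i,j}$ agrees, on each $\chi$ in the support of $\theta^{(|V|)}_{K/k,S,T}(0)$, with the matrix of $f_\R \circ \lambda_{K,S}^{-1}$ in the basis $\{e_\chi(w_j - w_0)\}$, and this yields the desired identity $F(\varepsilon^V_{K/k,S,T}) = \theta^{(|V|)}_{K/k,S,T}(0) \cdot {\det}_{\R[G]}(f_\R \circ \lambda_{K,S}^{-1})$.

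For the reverse inclusion $\subseteq$, given a pure wedge $F = f_1 \wedge \cdots \wedge f_{|V|}$ I would set
\[
f(a) \, := \, \sum_{i=1}^{|V|} f_i(a) \cdot (w_i - w_0) \, \in \, X_{K,S}
\]
and verify by the analogous basis-adapted computation that the matrix of $f_\R \circ \lambda_{K,S}^{-1}$ coincides on the support of $\theta^{(|V|)}_{K/k,S,T}(0)$ with $\bigl((f_i \circ \lambda_{K,S}^{-1})(w_j - w_0)\bigr)_{i,j}$; the case of general (non-pure) $F$ then reduces to this one via the $\Z[G]$-linearity of $F \mapsto F(\varepsilon^V_{K/k,S,T})$ and the compatibility of the two constructions under the bijective dictionary between tuples $(f_i)_i$ and single maps $f$ exhibited above. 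The main technical ingredient is the basis claim for $\{e_\chi(w_i - w_0)\}_{i=1}^{|V|}$ on the support of $\theta^{(|V|)}_{K/k,S,T}(0)$; once that and the matching of matrices on each such $\chi$-component have been established, the equality of the two sets is a routine bookkeeping exercise with the determinant pairing.
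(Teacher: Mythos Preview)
The paper's own proof is a one-line citation to \cite[Lem.~2.2]{MaciasCastillo12}, so there is no detailed argument in the paper to compare your work against; you are in effect attempting to reprove that cited lemma from scratch. The ingredients you isolate---the defining identity for $\varepsilon^V_{K/k,S,T}$, the description of the support of $\theta^{(|V|)}_{K/k,S,T}(0)$ via the order-of-vanishing formula, the basis property of $\{e_\chi(w_i-w_0)\}_{i=1}^{|V|}$ on that support, and the mutually inverse constructions $f\mapsto(\phi_i\circ f)_i$ and $(f_i)_i\mapsto\bigl(a\mapsto\sum_i f_i(a)(w_i-w_0)\bigr)$---are all correct and give a clean proof that the right-hand set coincides with $\{F(\varepsilon^V_{K/k,S,T}):F\text{ a pure wedge}\}$.

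There is, however, a real gap in your reduction of the inclusion $\subseteq$ to the case of pure wedges. The linearity of $F\mapsto F(\varepsilon^V_{K/k,S,T})$ tells you that $\im(\varepsilon^V_{K/k,S,T})$ is the $\Z[G]$-module \emph{generated} by the pure-wedge values; to conclude that it is \emph{contained} in the right-hand set you would need that set to be closed under addition, and this is exactly what your ``bijective dictionary'' does not supply. The dictionary is $\Z[G]$-linear at the level of tuples $(f_i)$ and maps $f$, but the passage $f\mapsto\det_{\R[G]}(f_\R\circ\lambda_{K,S}^{-1})$ is homogeneous of degree $|V|$, not additive, so closure of its image under addition is not automatic and is not ``routine bookkeeping''. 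Establishing that closure---or, equivalently, showing directly that every $F(\varepsilon^V_{K/k,S,T})$ for non-pure $F$ equals $\det_{\R[G]}(f_\R\circ\lambda_{K,S}^{-1})\cdot\theta^{(|V|)}_{K/k,S,T}(0)$ for a single $f$---is the substantive point that remains, and is presumably handled in the cited reference.
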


\begin{proof}
This is an immediate consequence of \cite[Lem.~2.2]{MaciasCastillo12}. 
\end{proof}

\begin{rk} \label{known cases rubin--stark}
The `Rubin--Stark Conjecture' \cite[Conj.~B']{Rub96} predicts that $\im (\varepsilon^V_{K / k, S, T})$ is contained in $\Z [G]$ for any Rubin datum $(S, V, T)$. To the best of the authors' knowledge, the following is a complete list of cases in which the Rubin--Stark Conjecture is known at present. 
\begin{liste}
\item If $K = k$, then the conjecture is a direct consequence of the analytic class number formula. 
\item The Rubin--Stark Conjecture holds if $k = \Q$. If $V = S_\infty$ is the singleton comprising the unique infinite place of $\Q$, then this follows by a direct computation that shows that $\varepsilon^V_{K / k, S, T}$ can be expressed in terms of a cyclotomic unit (cf.\@ \cite[Ch.\@ III, \S\,5]{Tate}).
The general case, even more generally for $k$ a finite abelian extension of $\Q$, is a consequence of the `equivariant Tamagawa Number Conjecture' (cf.\@ \cite[Thm\@ 3.1\,(i)]{Burns07} or \cite[Thm.\@ 5.12]{BKS}) which is known to hold for finite abelian extensions of $\Q$ by work of Burns and Greither \cite{BurnsGreither} and Flach \cite{Flach11}. 
\item If $k$ is an imaginary quadratic field and $V = S_\infty$ is the singleton comprising the unique infinite place of $k$, then the Rubin--Stark Conjecture follows from Kronecker's Second Limit Formula for elliptic units (cf.\@ \cite[Ch.\@ IV, Prop.\@ 3.9]{Tate}). In addition, the conjecture is known in general for extensions $K / k$, with $k$ a finite abelian extension of an imaginary quadratic field, for which the equivariant Tamagawa Number Conjecture is known to hold. In this direction, the reader is referred to recent work of Hofer and the first author \cite[Thm.\@ B]{BullachHofer}.
\item If $V = \emptyset$, then the conjecture is a consequence of work of Cassou-Nogu\`es \cite{Cassou} and, independently, Deligne and Ribet \cite{DeligneRibet} (cf.\@ \cite[Prop.\@ 3.7]{gross88}).
\item If $k$ is a totally real field and $K$ is CM, then the conjecture follows from work of Dasgupta and Kakde \cite{DasguptaKakde} on the Strong Brumer--Stark Conjecture, up to an unspecified power of $2$ (cf.\@ \cite[Thm.\@ 1.6]{DasguptaKakde}). Moreover, Dasgupta, Kakde, Silliman and Wang have recently announced a full proof of the Rubin-Stark Conjecture in this setting.
\item Rubin has proved \cite[Thm.\@ 3.5]{Rub96} that the Rubin--Stark Conjecture holds if $[K : k] = 2$.
\item Grant \cite{grant} has verified the Rubin--Stark Conjecture for $k = \Q ( \zeta_5)$ and $K = k ( \sqrt[5]{\epsilon})$ with $\zeta$ a primitive $5$-th root of unity and $\epsilon \coloneqq - \zeta^2 - \zeta^3$.
\item If $K / k$ is a multi-quadratic extension, then Dummit, Sands, and Tangedahl \cite{DummitSandsTangedahl}, Sands \cite{Sands}, and the second author \cite{MaciasCastillo12} have verified the conjecture in special cases. 
 \item The conjecture holds if $S \setminus V$ contains a place that splits completely in $K$ (cf.\@ \cite[Prop.\@ 3.1]{Rub96}).    
 \item McGown, Sands, and Valli\`eres \cite{McGown--Sands--Vallieres} have numerically verified the conjecture for the 19197 examples of cubic extensions $K / k$ with $K$ a totally real field of discriminant less than $10^{12}$, $k$ a real quadratic field, and $V = S_\infty$. 
\end{liste}
\end{rk}

\section{The proof of Theorem \ref{result 1}} \label{proof of result 1 section}

\subsection{Weil-\'etale cohomology complexes}

In this preliminary section, we briefly recall key properties of a useful family of complexes constructed by Burns, Kurihara, and Sano in \cite{BKS}. To do so, we let $K/F$ be an arbitrary finite Galois extension of number fields with group $\Delta_F\coloneqq\gal{K}{F}$.\\
We write $D(\ZZ[\Delta_F])$ for the derived category of $\ZZ[\Delta_F]$-modules and $D^\mathrm{p}(\ZZ[\Delta_F])$ for its full triangulated subcategory comprising complexes that are `perfect', that is, isomorphic (in $D(\ZZ[\Delta_F])$) to a bounded complex of finitely generated projective $\ZZ[\Delta_F]$-modules.

\begin{lemma}\label{WeLemma} Fix sets $S$ and $T$ of places of $F$ that satisfy the conditions (H1) and (H3) in \S\,\ref{1} with $k$ replaced by $F$.
Then the `Weil-\`etale cohomology complex'
\[
C^\bullet_{K, S, T} \coloneqq \text{R} \Hom_\Z ( \text{R} \Gamma_{c, T} ( ( \bigO_{K, S})_\mathcal{W}, \Z), \Z) [-2]
\] 
constructed in \cite[Prop.~2.4]{BKS} is an object of $D^{\mathrm{p}}(\ZZ[\Delta_F])$ that has the following properties.\begin{itemize}
\item[(i)] $C^\bullet_{K,S,T}$ is acyclic outside degrees zero and one, with $H^0(C^\bullet_{K,S,T})=\cO_{K,S,T}^\times$, and the `transpose Selmer group' $\Sel^\mathrm{tr}_{K,S,T} \coloneqq H^1(C^\bullet_{K,S,T})$ lies in a short exact sequence of $\Delta_F$-modules
\begin{cdiagram}
    0\arrow{r} & \Cl_{K,S,T}\arrow{r} & \Sel^\mathrm{tr}_{K,S,T}\arrow{r} &  X_{K,S}\arrow{r} &  0.
\end{cdiagram}
\item[(ii)] $C^\bullet_{K,S,T}$ is isomorphic in $D(\ZZ[\Delta_F])$ to a complex $[P_0 \stackrel{\phi}{\to} P_1]$ in which $P_0$ is finitely generated projective (and placed in degree 0) while $P_1$ is free of finite rank.
\item[(iii)] For any normal subgroup $\Gamma$ of $\Delta_F$ there is, in $D^{\mathrm{p}}(\ZZ[\Delta_F/\Gamma])$, a canonical isomorphism 
\[
\ZZ[\Delta_F/\Gamma]\otimes^{\mathbb{L}}_{\ZZ[\Delta_F]}C^\bullet_{K,S,T}\cong C^\bullet_{K^\Gamma,S,T}.
\]
\end{itemize}
\end{lemma}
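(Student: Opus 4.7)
The plan is to deduce each of the three properties directly from the construction of Burns, Kurihara, and Sano in \cite{BKS}, supplemented only by a short homological-algebra step to refine the form of the two-term representative in~(ii).

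Property~(i) is essentially the content of \cite[Prop.\,2.4]{BKS}: their construction immediately gives $C^\bullet_{K,S,T}\in D^{\mathrm{p}}(\ZZ[\Delta_F])$, acyclicity outside degrees $0$ and~$1$, and $H^0(C^\bullet_{K,S,T})=\cO_{K,S,T}^\times$; the short exact sequence relating $H^1(C^\bullet_{K,S,T})=\Sel^\mathrm{tr}_{K,S,T}$, $\Cl_{K,S,T}$ and $X_{K,S}$ then arises by linear duality from the defining sequence of the integral dual Selmer group recalled in the introduction (note that the appearance of the class group, rather than its Pontryagin dual, reflects the self-duality of finite abelian groups). Property~(iii) likewise reflects functoriality of the Weil-\'etale compactly supported cohomology under change of field and is recorded in \cite[\S\,2]{BKS}.

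For~(ii), the plan is as follows. Since $C^\bullet_{K,S,T}\in D^{\mathrm{p}}(\ZZ[\Delta_F])$ is acyclic outside degrees $0$ and~$1$, a standard truncation argument yields a representative $[Q_0\to Q_1]$ with both $Q_i$ finitely generated projective $\ZZ[\Delta_F]$-modules. To upgrade $Q_1$ to a free module, I would choose a surjection $\pi\colon P_1\twoheadrightarrow Q_1$ from a finitely generated free $\ZZ[\Delta_F]$-module $P_1$ and form the pullback $P_0\coloneqq Q_0\times_{Q_1}P_1$. A short diagram chase shows that the natural map $[P_0\to P_1]\to [Q_0\to Q_1]$ is a quasi-isomorphism (both the kernel and cokernel of this map of complexes being readily seen to be acyclic), while $P_0$ is an extension of $Q_0$ by $\ker(\pi)$, both finitely generated projective (the latter because $Q_1$ is projective, so the sequence $0\to\ker(\pi)\to P_1\to Q_1\to 0$ splits), and is therefore itself finitely generated projective. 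This pullback trick is the only genuine step in the argument, and I expect no serious obstacle, since it is a standard device for rigidifying two-term projective representatives of perfect complexes; the remaining content is bookkeeping against \cite{BKS}.
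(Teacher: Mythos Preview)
Your approach matches the paper's, which simply cites \cite{BKS} for each claim (Prop.~2.4\,(iv) for perfectness, Rem.~2.7 for~(i), \S5.4 for~(ii), and Prop.~2.4\,(i) together with functoriality of \'etale cohomology for~(iii)); your explicit pullback construction for~(ii) is correct and is the standard device one would expect to find in \cite[\S5.4]{BKS}. One small caution: the short exact sequence for $\Sel^{\mathrm{tr}}_{K,S,T}$ in~(i) is not literally obtained by applying $\Hom_\Z(-,\Z)$ to the defining sequence for $\Sel_{K,S,T}$ recalled in the introduction (for instance $X_{K,S}$ is not the $\Z$-double-dual of $\cO_{K,S,T}^\times$, and as a $\Delta_F$-module $\Cl_{K,S,T}$ need not be isomorphic to its Pontryagin dual), so you should cite \cite[Rem.~2.7]{BKS} directly rather than attempt that derivation.
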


\begin{proof}
$C^\bullet_{K,S,T}$ is an object of $D^\mathrm{p}(\ZZ[\Delta_F])$ by choice of $S$ and by \cite[Prop.~2.4\,(iv)]{BKS}. Claim (i) is Remark 2.7 in loc.\@ cit. Claim (ii) is proved in \S\,5.4 of loc. cit. Claim (iii) follows from the diagram in Prop.\@ 2.4\,(i) of loc.\@ cit.\@ and the functoriality properties of \'etale cohomology.
\end{proof}

\subsection{The proof in case (i)}
In this subsection we assume the hypotheses of Theorem \ref{result 1}\,(i). In particular, $\Delta \coloneqq \gal{K}{\kappa}$ is isomorphic to $\Aff (q)$, and $(S, V, T)$ is a Rubin datum for $K / k$ with $|S| > |V| + 1$.\\
Since $G  = \gal{K}{k}$ is abelian, the complex $C^\bullet_{K,S,T}$ in $D^{\mathrm{p}}(\ZZ[G])$ admits a well-defined determinant $\Det_{\Z [G]} (C^\bullet_{K, S, T})$ (in the sense of Knudsen--Mumford).
We then also use the `zeta element' $z_{K / k, S, T} \in \R \otimes_\Z \Det_{\Z [G]} ( C^\bullet_{K, S, T})$, the definition of which can be found in \cite[Def.~3.5]{BKS} and will be recalled in the course of the proof of Lemma \ref{lemma 1} below. For the moment we only note that $z_{K / k, S, T}$ is by construction an $\R [G]$-basis of the free rank-one $\R [G]$-module $\R \otimes_\Z \Det_{\Z [G]} ( C^\bullet_{K, S, T})$. 

\begin{lem} \label{lemma 1}
The following claims are valid. 
\begin{liste}
\item The zeta element $z_{K / k, S, T}$ belongs to $\Q \otimes_\Z \Det_{\Z [G]} ( C^\bullet_{K, S, T})$. In particular, $z_{K / k, S, T}$ is a $\Q [G]$-basis of the free rank-one $\Q [G]$-module  $\Q \otimes_\Z \Det_{\Z [G]} ( C^\bullet_{K, S, T})$. 
\item For every prime number $\ell$, there exists an element $\fz^{(\ell)}_{K / k, S, T}$ of $\Det_{\Z [G]} ( C^\bullet_{K, S, T})$ with the following properties: 
\begin{enumerate}[label=(\roman*)]
    \item The $\Z [G]$-submodule of $\Det_{\Z [G]} ( C^\bullet_{K, S, T})$ generated by $\fz^{(\ell)}_{K / k, S, T}$ has prime-to-$\ell$ index. 
    \item The unique element $\lambda^{(\ell)} \in \Q [G]$ defined by $z_{K / k, S, T} = \lambda^{(\ell)} \cdot  \fz^{(\ell)}_{K / k, S, T}$ belongs to the image of the map
\[
\rho_{\Delta / G} \: \zeta(\C [\Delta]) \to \C [G], \quad x \mapsto \sum_{\chi \in \widehat{G}} \big( \prod_{\psi \in \widehat{\Delta}} \psi (x)^{\langle \psi, \Ind_G^\Delta( \chi) \rangle} \big) \cdot e_\chi,
\]
where $\widehat{\Delta}$ is the set of irreducible characters of $\Delta$, $\langle \cdot, \cdot \rangle$ denotes the inner product of characters, $\zeta(\C [\Delta]) \cong \prod_{\psi \in \widehat{\Delta}} \C$ denotes the centre of $\C [\Delta]$, and we have written $\psi$ for the map $\zeta(\C [\Delta]) \to \C$ induced by $\psi$.
\end{enumerate}
\end{liste}

\end{lem}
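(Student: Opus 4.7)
Both parts of the lemma are proved by comparing the $G$-equivariant zeta element $z_{K/k,S,T}$ with a $\Delta$-equivariant counterpart for the extension $K/\kappa$, and then using the known validity of Stark's rationality conjecture for extensions with Galois group $\Aff(q)$ (due to Tate [Tate, Ch.~II, Thm.~6.8]). The representation-theoretic input is that the characters of $\Delta = \gal{K}{\kappa}$ comprise the $q-1$ one-dimensional characters factoring through $\Delta/G \cong \mathbb{F}_q^\times$ together with a unique irreducible $(q-1)$-dimensional character $\psi_0$, and that induction from $G$ to $\Delta$ sends $1_G$ to the sum of the abelian characters of $\Delta$ and each non-trivial $\chi \in \widehat{G}$ to $\psi_0$.

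\textbf{Part (a).} By the construction in [BKS, Def.~3.5], $z_{K/k,S,T}$ corresponds to $\theta^{(|V|)}_{K/k,S,T}(0)$ under the canonical $\R[G]$-equivariant trivialisation of $\R \otimes_\Z \Det_{\Z[G]}(C^\bullet_{K,S,T})$ coming from the Dirichlet regulator (\ref{dirichlet regulator}) and the description of the cohomology of $C^\bullet_{K,S,T}$ in Lemma \ref{WeLemma}(i). Rationality of $z_{K/k,S,T}$ in $\Q \otimes_\Z \Det_{\Z[G]}(C^\bullet_{K,S,T})$ is therefore equivalent to Stark's rationality conjecture holding for each $\chi \in \widehat{G}$. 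The factorisation $L_{k,S,T}(\chi,s) = L_{\kappa,S_\kappa,T_\kappa}(\Ind_G^\Delta \chi,s)$, combined with the functoriality of Stark's conjecture under induction, reduces this to verifying the rationality conjecture for every $\psi \in \widehat{\Delta}$, which is precisely Tate's theorem applied to $K/\kappa$.

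\textbf{Part (b).} The idea is to construct $\fz^{(\ell)}$ from a $\Delta$-equivariantly chosen integral basis. Using Lemma \ref{WeLemma}(ii), fix a representative $C^\bullet_{K,S,T} \cong [P_0 \xrightarrow{\phi} P_1]$ of finitely generated $\Z[\Delta]$-modules with $P_1$ free. After completion at $\ell$, one can then choose bases adapted to the $\Z_\ell[\Delta]$-structure: when $\ell \nmid |G|$ this uses the fact that $\Z_\ell[\Delta]$ is a maximal order and $P_0 \otimes \Z_\ell$ is $\Z_\ell[\Delta]$-projective; when $\ell = p$ (so that $q = p^n$), the crucial observation is that $[\Delta:G] = q-1$ is coprime to $p$, so $\Z_p[\Delta]$ decomposes as a product of orders indexed by the characters of $\Delta/G$. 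From any such basis one extracts an element $\fz^{(\ell)} \in \Det_{\Z[G]}(C^\bullet_{K,S,T})$ whose image in $\Det_{\Z_\ell[G]}$ is a $\Z_\ell[G]$-generator, thereby ensuring that the index in (i) is prime to $\ell$.

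\textbf{Verification of (ii) and main obstacle.} By construction $\fz^{(\ell)}$ is, up to a $\Z_{(\ell)}[G]$-unit, the $G$-equivariant shadow of a $\Delta$-equivariant basis, while the rationality of $z_{K/k,S,T}$ is governed by the Stickelberger element of $K/\kappa$ through the induction identity recalled in Part (a). Consequently, the $\chi$-component of $\lambda^{(\ell)}$ for each $\chi \in \widehat{G}$ equals $\prod_{\psi \in \widehat{\Delta}} \psi(x)^{\langle \psi, \Ind_G^\Delta \chi \rangle}$ for a common $x \in \zeta(\Q[\Delta])$ encoding the ratios at the $\Delta$-level; this is exactly the defining formula of $\rho_{\Delta/G}(x)$. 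The principal technical difficulty is the explicit construction of $\fz^{(\ell)}$ with prime-to-$\ell$ index at $\ell = p$, where $\Z_p[G]$ fails to be a maximal order: here one must exploit the Wedderburn-type decomposition of $\Z_p[\Delta]$ across the characters of $\Delta/G$ (of order coprime to $p$), together with the $\Z_p[\Delta]$-structure of $P_0 \otimes \Z_p$, to obtain the required integral lift.
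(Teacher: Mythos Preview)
Part (a) matches the paper's argument. Part (b), however, has genuine gaps. First, your claim that $\Z_\ell[\Delta]$ is a maximal order whenever $\ell \nmid |G|$ is false: if $\ell \mid q-1$ then $\ell$ divides $|\Delta| = q(q-1)$ and $\Z_\ell[\Delta]$ is not maximal, so your case distinction breaks down precisely for these primes. The paper avoids any case split entirely. After first enlarging $S$ and $T$ to be $\Delta$-stable (a step you omit, but which is needed so that $C^\bullet_{K,S,T}$ is a perfect complex over $\Z[\Delta]$ and Lemma \ref{WeLemma}(ii) applied with $F = \kappa$ yields $\Z[\Delta]$-modules $P_0, P_1$ with $P_1$ free), it uses Noether--Deuring to get $\Q \otimes P_0 \cong \Q \otimes P_1$ and then invokes \emph{Roiter's Lemma} to produce, for each $\ell$, a $\Z[\Delta]$-injection $i^{(\ell)} \colon P_1 \hookrightarrow P_0$ with cokernel of prime-to-$\ell$ order. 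The element $\fz^{(\ell)}$ is then simply the wedge over $\Z[G]$ of the images under $i^{(\ell)}$ of the $\Z[G]$-basis of $P_1$ obtained from a fixed $\Z[\Delta]$-basis via coset representatives for $\Delta/G$.

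Your verification of (ii) is also too vague to constitute a proof; the key mechanism is missing. The paper's argument is explicit: one builds an $\R[\Delta]$-linear isomorphism $\alpha \colon \R \otimes P_0 \to \R \otimes P_1$ from the Dirichlet regulator together with chosen $\R[\Delta]$-splittings of the cohomology sequences, and compares the matrix $B^{(\ell)} \in \mathrm{GL}_d(\R[\Delta])$ of $\alpha$ in the $\Z[\Delta]$-bases with its matrix $A^{(\ell)} \in \mathrm{GL}_{(\Delta:G)d}(\R[G])$ in the induced $\Z[G]$-bases. Defining $\mu^{(\ell)} \in \zeta(\R[\Delta])^\times$ by $\mu^{(\ell)} \cdot \mathrm{Nrd}_{\R[\Delta]}(B^{(\ell)}) = \theta^\ast_{K/\kappa,S,T}(0)$, the \emph{functoriality of reduced norms under restriction to subgroups} gives $\rho_{\Delta/G}(\mathrm{Nrd}_{\R[\Delta]}(B^{(\ell)})) = \det_{\R[G]}(A^{(\ell)})$; combined with $\rho_{\Delta/G}(\theta^\ast_{K/\kappa,S,T}(0)) = \theta^\ast_{K/k,S,T}(0)$ (inductivity of Artin $L$-functions), this yields $\rho_{\Delta/G}(\mu^{(\ell)}) = \lambda^{(\ell)}$. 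This reduced-norm computation is the missing idea in your sketch.
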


\begin{proof}
Claim (a) is equivalent to Stark's Conjecture for $K / k$ (cf.\@ \cite[Thm.\@ 7.1\,b)]{Flach04}).
Since any non-trivial character of $G$ induces a rational-valued character of $\Delta$, the validity of Stark's Conjecture follows from Tate's proof of Stark's Conjecture for rational-valued characters in \cite[Ch.~II, Thm.~6.8]{Tate}. \\
To prove claim (b), we may enlarge $S$ and $T$ since, if $S'$ and $T'$ are respective disjoint finite oversets of $S$ and $T$, then the exact triangles in \cite[Prop.\@ 2.4, (ii) and right hand column of (6) in (i)]{BKS} induce an isomorphism
\[
\Det_{\Z [G]} (C^\bullet_{K, S', T'}) \stackrel{\simeq}{\longrightarrow} \Det_{\Z [G]} (C^\bullet_{K, S, T})
\]
that maps $z_{K / k, S', T'}$ to $z_{K / k, S, T}$. We therefore may and will
assume that $S$ contains all places that are ramified in $K / \kappa$ and that both $S$ and $T$ are stable under the action of $\Delta$. 

Since the complex $C^\bullet_{K,S,T}$ depends only on $K$, $S_K$ and $T_K$, we may then regard it also as an object of $D^{\mathrm{p}}(\ZZ[\Delta])$. We fix a representative of $C^\bullet_{K,S,T}$ in $D(\ZZ[\Delta])$ as in Lemma \ref{WeLemma}\,(ii) (applied to $F = \kappa$). We note that (\ref{dirichlet regulator}) combines with the Noether--Deuring Theorem to imply that $\Q \otimes_\Z P_0 \cong \Q \otimes_\Z P_1$. For every prime number $\ell$, Roiter's Lemma \cite[(31.6)]{CurtisReiner} then gives the existence of an injection $i^{(\ell)} \: P_1 \hookrightarrow P_0$ with finite cokernel of order prime to $\ell$.

We fix a set $\{ \sigma_1, \dots, \sigma_{(\Delta : G)}\}$ of representatives for $\Delta / G$ and choose an ordered $\Z [\Delta]$-basis $\mathfrak{B} = \{ b_1, \dots, b_d \}$ of $P_1$. 
Then $P_1$ is also a free $\Z [G]$-module, with (ordered) $\Z [G]$-basis 
\[ 
\mathfrak{B}' \coloneqq \{ \sigma_1 b_1, \dots \sigma_{(\Delta : G)} b_1, \dots, 
\sigma_1 b_d, \dots \sigma_{(\Delta : G)} b_d\}. 
\]
We also define ordered sets $\mathfrak{C}^{(\ell)} \coloneqq \{ i^{(\ell)} (b) \mid b \in \mathfrak{B} \}$ and ${\mathfrak{C}'}^{(\ell)} = \{ i^{(\ell)} (b) \mid b \in \mathfrak{B}'\}$.
Setting $P_1^\ast \coloneqq \Hom_{\Z [G]} (P_1, \Z [G])$, we now define
\[
\fz^{(\ell)}_{K / k, S, T} \coloneqq \big( \bigwedge_{c \in {\mathfrak{C}'}^{(\ell)}} c \big)
\otimes \big( \bigwedge_{b \in \mathfrak{B}'} b^\ast \big)
\; \in \; \big( \exprod_{\Z [G]}^{(\Delta : G)d} P_0 \big) \otimes_{\Z [G]} \big( \exprod_{\Z [G]}^{(\Delta : G)d} P_1^\ast \big) = \Det_{\Z [G]} (C^\bullet_{K, S, T}),
\]
where $b^\ast \: P_1 \to \Z [G]$ denotes the $\Z [G]$-linear dual of $b \in P_1$. By construction, the element $\fz^{(\ell)}_{K / k, S, T}$ then has property (i). \\ 
To justify claim (ii), we first recall the definition of the zeta element $z_{K / k, S, T}$. Our fixed choice of representative for $C^\bullet_{K, S, T}$ gives rise to exact sequences $0 \to \cO_{K,S, T}^\times \to P_0 \to \phi (P_0) \to 0$ and $\phi (P_0) \to P_1 \to \Sel^\mathrm{tr}_{K,S,T} \to 0$ of $\Z [\Delta]$-modules for which we may choose $\R [\Delta]$-splittings 
\[
\iota_1 \: \R \otimes_\Z P_0 \cong (\R \otimes_\Z \cO_{K,S, T}^\times)
\oplus ( \R \otimes_\Z \phi (P_0)),
\quad 
\iota_2 \: \R \otimes_\Z P_1 \cong (\R \otimes_\Z X_{K,S}) 
\oplus ( \R \otimes_\Z \phi (P_0)).
\]
Given this, we define the composite isomorphism of $\R [\Delta]$-modules
\[
\alpha \coloneqq (\iota_2^{-1} \circ (\lambda_{K, S} \oplus \id ) \circ \iota_1) \: P_0 \to P_1,
\]
where $\lambda_{K, S}$ denotes the Dirichlet regulator map defined in (\ref{dirichlet regulator}). We write $A^{(\ell)}$ for the matrix in $\mathrm{GL}_{(\Delta : G) d} ( \R [G])$ that represents $\alpha$ with respect to the bases ${\mathfrak{C}'}^{(\ell)}$ and $\mathfrak{B}'$. 

We consider the `leading term' 
\[
\theta^\ast_{K / \kappa, S, T} (0) \coloneqq 
\sum_{\psi \in \widehat{\Delta}} L^\ast_{\kappa, S, T} (\check\psi, 0) e_{\psi}\in\zeta(\RR[\Delta])^\times,
\]
where $\check\psi$ denotes the contragredient of $\psi$ and $L^\ast_{\kappa, S, T} (\check\psi, 0)$ is the leading term of $L_{\kappa, S, T} (\check\psi, s)$ at $s = 0$. Similarly, we set $\theta^\ast_{K / k, S, T} (0) \coloneqq 
\sum_{\chi \in \widehat{G}} L^\ast_{k, S, T} (\check\chi, 0) e_{\chi}\in\RR[G]^\times$.
One then has that $z_{K / k, S, T} = \lambda^{(\ell)} \cdot \fz^{(\ell)}_{K / k, S, T}$ with $\lambda^{(\ell)} \in \R [G]^\times$ the unique element such that
$
\lambda^{(\ell)} \cdot {\det}_{\R [G]} ( A^{(\ell)}) = \theta^\ast_{K / k, S, T} (0) $.
The reduced norm of the matrix $B^{(\ell)} \in \mathrm{GL}_d (\R [\Delta])$ that represents $\alpha$ with respect to the bases $\mathfrak{C}^{(\ell)}$ and $\mathfrak{B}$ belongs to $\zeta(\RR[\Delta])^\times$, and we define a scalar $\mu^{(\ell)} \in \zeta(\R [\Delta])^\times$ by
\[
\mu^{(\ell)} \cdot \mathrm{Nrd}_{\R [\Delta]} ( B^{(\ell)}) = \theta^\ast_{K / \kappa, S, T}(0). 
\]
By the functoriality of reduced norms under restriction to subgroups (see, for example, \cite[bottom of p.\@ 291]{Breuning04}) one has $\rho_{\Delta/G}(\mathrm{Nrd}_{\R [\Delta]} ( B^{(\ell)}))={\det}_{\R [G]} ( A^{(\ell)})$ and thus also
$$\rho_{\Delta / G} ( \mu^{(\ell)}) \cdot {\det}_{\R [G]} ( A^{(\ell)})=\rho_{\Delta / G} ( \theta^\ast_{K / \kappa, S, T}(0))=\theta^\ast_{K / k, S, T}(0),$$
from which we deduce that $\rho_{\Delta / G} ( \mu^{(\ell)}) = \lambda^{(\ell)}$. 
This concludes the proof of claim (b).
\end{proof}

We now give the proof of Theorem \ref{result 1} in case (i). 
 Since $\Delta \cong \Aff (q)$, one has that $\widehat{\Delta}$ consists of the linear characters of $\Delta / G$ and the unique irreducible character of degree $q - 1$ that is obtained as $\psi_\mathrm{nl} \coloneqq \Ind_G^\Delta (\chi)$ for any non-trivial character $\chi$ of $G$ (see, for example, \cite[Thm.\@ 5]{Motose}). As a consequence, one has
\[
\langle \psi, \Ind_G^\Delta( \chi) \rangle = 
\begin{cases}
1 \quad & \text{ if } \chi \neq \bm{1}_G, \psi = \psi_\mathrm{nl}, \\
1 & \text{ if } \chi = \bm{1}_G, \psi = \bm{1}_\Delta, \\
0 & \text{ otherwise}. 
\end{cases}
\]
For every prime number $\ell$, the element $\lambda^{(\ell)}$ provided by Lemma \ref{lemma 1}\,(b)\,(ii) is hence of the form $\lambda^{(\ell)} = a e_{\bm{1}} + b (1 - e_{\bm{1}})$ for suitable $a, b \in \Q$. \\
Now, the isomorphism $\ZZ\otimes^\mathbb{L}_{\Z [G]}C^\bullet_{K, S, T} \cong C^\bullet_{k, S, T}$ in Lemma \ref{WeLemma}\,(iii) induces an isomorphism $\ZZ\otimes_{\ZZ[G]}( \Q \otimes_\Z \Det_{\Z [G]} ( C^\bullet_{K, S, T})) \cong \Q \otimes_\Z \Det_\Z (C^\bullet_{k, S, T})$ that sends $1 \otimes z_{K / k, S, T}$ to $z_{k/k, S, T}$. In addition, the analytic class number formula for $k$ asserts that $z_{k/k, S, T}$ is a $\Z$-basis of the free rank-one $\Z$-module $\Det_\Z (C^\bullet_{k, S, T})$.

For each prime number $\ell$, we write $\ZZ_{(\ell)}$ for the localisation of $\ZZ$ at  the prime ideal $\ell\Z$.
The definition of $\fz_{K  / k, S, T}^{(\ell)}$ then implies that both $1 \otimes \fz_{K  / k, S, T}^{(\ell)}$ and $a \cdot (1 \otimes \fz_{K / k, S, T}^{(\ell)}) = 1 \otimes ( \lambda^{(\ell)} \fz_{K / k, S, T}^{(\ell)})  =  1 \otimes z_{K / k, S, T}$ are $\ZZ_{(\ell)}$-bases of $\ZZ\otimes_{\ZZ[G]}( \ZZ_{(\ell)} \otimes_\Z \Det_{\Z [G]} ( C^\bullet_{K, S, T})) $. We conclude that $a$ belongs to $\ZZ_{(\ell)}^\times$. \\
We next write $\NN = \NN_{\Q [G] / \Q} \: \Q [G] \to \Q$ for the ring-theoretic norm map and note that the construction of \cite[Lem.\@ 3.7\,(c)]{scarcity} gives the existence of an $\NN$-semilinear map 
$\mathcal{F} \: \Q \otimes_\Z \Det_{\Z [G]} ( C^\bullet_{K, S, T}) \to \Q \otimes_\Z \Det_\Z (C^\bullet_{K, S, T})$ that sends $z_{K / k, S, T}$ to $z_{K / K, S, T}$. Since $z_{K / K, S, T}$ is a $\Z$-basis of $\Det_\Z (C^\bullet_{K, S, T})$ by the analytic class number formula for $K$, we see that for each prime $\ell$, both $\mathcal{F} ( \fz_{K / k, S, T}^{(\ell)})$ and $z_{K / k, S, T} = \mathcal{F} ( z_{K / k, S, T}) = \mathcal{F} ( \lambda^{(\ell)} \fz_{K / k, S, T}^{(\ell)}) = \NN ( \lambda^{(\ell)}) \cdot \mathcal{F} ( \fz_{K / k, S, T}^{(\ell)})$ are $\Z_{(\ell)}$-bases of $\Z_{(\ell)} \otimes_\Z \Det_\Z (C^\bullet_{K, S, T})$. It follows that $\NN (\lambda^{(\ell)}) = a b^{q - 1}$ must also belong to $\Z_{(\ell)}^\times$. Upon recalling that $a \in \Z_{(\ell)}^\times$ by the above discussion, we conclude that $b^{q - 1} \in \Z_{(\ell)}^\times$. Since $b$ is rational, we deduce that $b$ belongs to $\Z_{(\ell)}^\times$.  \\
Define an idempotent $e_{K, S, V}$ of $\Q [G]$ as the sum $\sum_\chi e_\chi$ of all primitive orthogonal idempotents $e_\chi$ associated with characters $\chi$ of $G$ such that $e_\chi$ annihilates $\CC\otimes_\ZZ X_{K, S \setminus V}$. \\
We then define a `projection map' $\Theta_{K / k, S}^V$ as the composite map
\begin{align*}
 \Q \otimes_\Z \Det_{\Z [G]} ( C^\bullet_{K, S}) 
& \xrightarrow{\phantom{\cdot e_{K, S, V}}} \Det_{\Q [G]} ( \Q \otimes_\Z \cO_{K, S}^\times) 
\otimes_{\Q [G]} \Det_{\Q [G]} ( \Q \otimes_\Z X_{K, S})^{-1} \\
& \xrightarrow{\cdot e_{K, S, V}}
e_{K, S, V} \cdot \big ( ( \Q \otimes_\Z \exprod^{|V|}_{\Z [G]} \cO_{K, S, T}^\times)
\otimes_{\Q [G]} ( \Q \otimes_\Z \exprod^{|V|}_{\Z [G]} Y_{K, V} )^{-1} \big) \\
& \xrightarrow{\quad \simeq \quad} e_{K, S, V} \cdot ( \Q \otimes_\Z \exprod^{|V|}_{\Z [G]} \cO_{K, S, T}^\times),
\end{align*}
where the first arrow is the natural `passage-to-cohomology' map, the second map is induced by multiplication by $e_{K, S, V}$, and the last arrow by the trivialisation $\exprod^{|V|}_{\Z [G]} Y_{K, V} \cong \Z [G]$ that is afforded by sending $\exprod_{1 \leq i \leq |V|} w_i$ to $1$. \\
Note that our hypothesis $|S| > |V| + 1$ combines with the short exact sequence $0\to X_{K,S\setminus V}\to X_{K,S}\to Y_{K,V}\to 0$ to imply that $e_{\bm{1}}\cdot e_{K, S, V}=0$. In particular, we have $\lambda^{(\ell)} \cdot e_{K, S, V} = (a e_{\bm{1}} + b ( 1 - e_{\bm{1}})) \cdot e_{K, S, V} = b e_{K, S, V}$. 
Since it is proved in \cite[Thm.\@ 5.14]{BKS} that one has $\Theta_{K / k, S}^V ( z_{K / k, S, T}) = \varepsilon^V_{K / k, S, T}$, we therefore deduce that
\[
\varepsilon^V_{K / k, S, T} = \Theta_{K / k, S, T}^V ( z_{K / k, S, T}) 
= \lambda^{(\ell)} \cdot \Theta_{K / k, S, T}^V (\fz^{(\ell)}_{K / k, S, T}) = b \cdot \Theta_{K / k, S, T}^V (\fz^{(\ell)}_{K / k, S, T})
\]
for each prime $\ell$. Now, the equality $\Z_{(\ell)} \otimes_\Z \im ( \Theta_{K / k, S, T}^V (\fz^{(\ell)}_{K / k, S, T}))
= \Z_{(\ell)} \otimes_\Z \Fitt^{|V|}_{\Z [G]} ( \Sel_{K, S, T})^\#$ that is established via the argument of
\cite[Thm.\@ 7.5]{BKS} combines with the last displayed equation and the fact that $b$ is invertible to imply that 
\[
\Z_{(\ell)} \otimes_\Z \im ( \varepsilon^V_{K / k, S, T}) 
= \Z_{(\ell)} \otimes_\Z  \big ( b \cdot \im ( \Theta_{K / k, S, T}^V (\fz^{(\ell)}_{K / k, S, T})) \big)
= \Z_{(\ell)} \otimes_\Z \Fitt^{|V|}_{\Z [G]} ( \Sel_{K, S, T})^\#.
\]
The claim in Theorem \ref{result 1}\,(i) now follows upon recalling that $\ell$ is an arbitrary prime number. 

\subsection{The proof in case (ii)}

To prove Theorem \ref{result 1} in case (ii), we let $K/k$ be a biquadratic extension of number fields and note that, by the 
known validity of Stark's Conjecture for $K / k$, the zeta element $z_{K / k, S, T}$ is a $\Q [G]$-basis of the free rank-one $\Q [G]$-module  $\Q \otimes_\Z \Det_{\Z [G]} ( C^\bullet_{K, S, T})$ (cf.\@ the argument of Lemma \ref{lemma 1}\,(a)). 
We then let $\ell$ be an arbitrary prime number and choose, using Roiter's Lemma, an element $\fz_{K / k, S, T}^{(\ell)}$ that generates a $\Z [G]$-submodule of $\Det_{\Z [G]} ( C^\bullet_{K, S, T})$ of finite, prime-to-$\ell$ index. Label the proper intermediate fields of $K / k$ as $K_1 \coloneqq k, K_2, K_3$, and $K_4$, and, using Lemma \ref{WeLemma}(iii), denote the image of $\fz_{K / k, S, T}^{(\ell)}$ under the natural map 
\[
\Det_{\Z [G]} ( C^\bullet_{K, S, T}) \to 
\Z [\gal{K_i}{k}]\otimes_{\Z [G]}\Det_{\Z [G]} ( C^\bullet_{K, S, T})   \cong 
\Det_{\Z [\gal{K_i}{k}]} ( C^\bullet_{K_i, S, T})
\]
as $\fz_{K_i / k, S, T}^{(\ell)}$ for every $i \in \{1, \dots, 4 \}$. 
Write $\chi_i$ for the trivial character if $i = 1$ and the non-trivial character of $\gal{K_i}{k}$ otherwise.
The discussion above (in case (i)) then shows that we have 
\[
e_{\chi_i} \cdot \fz_{K_i / k, S, T}^{(\ell)} = a_i \cdot e_{\chi_i} \cdot z_{K_i / k, S, T}
\]
for some $a_i$ in $\Z_{(\ell)}^\times$. It follows that
\[
\fz_{K / k, S, T}^{(\ell)} = 
( \sum_{i = 1}^4 a_i e_{\chi_i}) \cdot z_{K / k, S, T}. 
\]
If $\ell \neq 2$, then it is clear that $\lambda^{(\ell)} \coloneqq \sum_{i = 1}^4 a_i e_{\chi_i}$ belongs to $\Z_{(\ell)} [G]^\times$. For $\ell = 2$, the scalar $\lambda^{(2)}$ belongs to $\Z_{(2)} [G]^\times$ if and only if it belongs to $\Z_{(2)} [G]$ because $\NN_{\Q [G] / \Q} (\lambda^{(2)}) = \prod_{i = 1}^4 a_i$ is a unit in $\Z_{(2)}$. Now, $\lambda^{(2)}$ is in $\Z_{(2)} [G]$ if and only if, for every $\sigma \in G$ we have that
\[
\sum_{i = 1}^4  a_i \chi_i (\sigma) \equiv 0 \mod 4. 
\]
Note that $\chi_i (\sigma) = \pm 1$ and $a_i \equiv \pm 1 \mod 4$ for all $i \in \{1, \dots, 4\}$. One can then check explicitly that the above congruence holds if and only if $\prod_{i = 1}^4 a_i \equiv 1 \mod 4$ (cf.\@ also \cite[Lem.~6.3\,(v)]{Buckingham}).  
In particular, if we let $b \in \{ \pm 1 \}$ be defined by $b \equiv \prod_{i = 1}^4 a_i \mod 4$, then $\lambda' \coloneqq b a_1 e_{\bm{1}} + \sum_{i = 1}^3 a_i e_{\chi_i}$ belongs to $\Z_{(2)} [G]^\times$. \\
Furthermore, the assumption $|S| > |V| + 1$ ensures that $\lambda e_{K, S, V} = \lambda' e_{K, S, V}$ and so we obtain 
\[
\Z_{(2)} \otimes_\Z \im ( \varepsilon^V_{K / k, S, T}) 
= \Z_{(2)} \otimes_\Z \lambda' \cdot \im ( \Theta_{K / k, S, T}^V (\fz^{(2)}_{K / k, S, T}))
= \Z_{(2)} \otimes_\Z \Fitt^{|V|}_{\Z [G]} ( \Sel_{K, S, T})^\#.
\]
As the corresponding identity holds for each odd $\ell$, this completes the proof of Theorem \ref{result 1}. \qed 
\begin{rk}
The only instances of (i) and (ii) in Theorem \ref{result 1} that can neither be treated by the argument used to prove Theorem \ref{result 1} nor Remark \ref{known cases rubin--stark}\,(i) are the cases in which $|S| = |V| + 1$ and the unique place $v \in S \setminus V$ has full decomposition group in $K / k$.
In any such situation and for `large-enough' $V$, the inclusion (\ref{image RS}) is in fact equivalent to the relevant case of the `equivariant Tamagawa Number Conjecture' and amounts to a subtle question about signs. To make this more explicit, we suppose that $K / k$ is biquadratic, $|S| = |V| + 1$, and $V$ is large enough such that $\Cl_{K, S, T}$ vanishes. 
Then $\cO_{K, S, T}^\times$ is a free $\Z [G]$-module of rank $|V|$ and we can choose an ordered $\Z [G]$-basis $\mathfrak{B}$ of $\cO_{K, S, T}^\times$. Fix an ordering $G  = \{ g_1, g_2, g_3, g_4 \}$, and define an ordered $\Z$-basis $\mathfrak{B}'$ of $\cO_{K, S, T}^\times$ by setting $\mathfrak{B}' \coloneqq \{ g b \mid g \in G, b \in \mathfrak{B}\}$ ordered lexicographically. Similarly, we set $\mathfrak{W} \coloneqq \{ g w_i \mid g \in G, 1 \leq i \leq |V| \}$ ordered lexicographically. 
Then one can show that (\ref{image RS}) is equivalent to
\[
{\det}_\R (  \log | b |_w )_{b \in \mathfrak{B}', w \in \mathfrak{W}} < 0.
\]
(Cf.\@ \cite[Prop.\@ 10.5]{Buckingham}.) This question does not depend on the ordering on $G$ and, since 
$G$ is $\Z / 2 \Z \oplus \Z / 2 \Z$, also not on the choice of basis $\mathfrak{B}$ (or the ordering on it) because every unit in $\Z [G]$ is of the form $\pm g$ for some $g \in G$ in this case, and so has norm 1. 
\end{rk}

\section{The proof of Theorem \ref{result 2}} \label{proof of result 2 section}

We now fix a $p$-elementary extension $K/k$ with $G \cong (\nZ{p})^m$. Write $\Omega^\ast$ for the set of subgroups $H$ of $G$ of index at most $p$.
The following algebraic observation plays a key role in the sequel. 

\begin{lem} \label{algebraic-lemma}
Set $\NN_H = \sum_{\tau \in H} \tau$ for every $H \in \Omega^\ast$.
In $\Z [G]$ we then have the equality
\[
\sum_{H \in \Omega^\ast} \NN_H + \Big ( (p^{m - 1}  - 1) - \Big (\sum_{i = 0}^{m - 1} p^i \Big)\Big) \cdot \NN_G = p^{m - 1}.
\]
\end{lem}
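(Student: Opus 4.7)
The plan is to verify the stated identity by comparing, for each element $\sigma \in G$, the coefficient of $\sigma$ on the left-hand side with that on the right-hand side. Since $\NN_H = \sum_{\tau \in H} \tau$, the coefficient of $\sigma$ in $\NN_H$ is $1$ if $\sigma \in H$ and $0$ otherwise, so the identity reduces to a counting problem involving subgroups of $G$ containing a fixed element.

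The key input is a count of the subgroups of $G \cong (\Z/p\Z)^m$ of index $p$, which correspond to hyperplanes in the $\mathbb{F}_p$-vector space $G$. First I would recall that the total number of such hyperplanes equals $(p^m - 1)/(p - 1) = \sum_{i = 0}^{m - 1} p^i$, so that $|\Omega^\ast| = 1 + \sum_{i = 0}^{m - 1} p^i$ (the extra $1$ accounting for $G$ itself). Next, for any \emph{non-identity} element $\sigma \in G$, I would count the number of hyperplanes $H \subseteq G$ containing $\sigma$: these are in bijection with the hyperplanes of the quotient $G/\langle \sigma \rangle \cong \mathbb{F}_p^{m - 1}$, so there are $(p^{m - 1} - 1)/(p - 1) = \sum_{i = 0}^{m - 2} p^i$ of them, and hence the number of $H \in \Omega^\ast$ containing $\sigma$ equals $1 + \sum_{i = 0}^{m - 2} p^i$.

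With these counts in hand, the verification is a direct computation. For $\sigma$ the identity, the coefficient on the left is
\[
\Big ( 1 + \sum_{i = 0}^{m - 1} p^i \Big ) + \Big ( (p^{m - 1} - 1) - \sum_{i = 0}^{m - 1} p^i \Big ) = p^{m - 1},
\]
matching the right-hand side. For any $\sigma \neq e$, the coefficient on the left is
\[
\Big ( 1 + \sum_{i = 0}^{m - 2} p^i \Big ) + \Big ( (p^{m - 1} - 1) - \sum_{i = 0}^{m - 1} p^i \Big ) = p^{m - 1} - p^{m - 1} = 0,
\]
again matching the right-hand side. Since the two sides agree in every coordinate, they are equal in $\Z [G]$.

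The argument is essentially a bookkeeping exercise and presents no genuine obstacle; the only nontrivial ingredient is the formula for the number of codimension-one subspaces of a finite-dimensional $\mathbb{F}_p$-vector space, both in $G$ itself and in the quotient $G/\langle \sigma\rangle$. The shape of the correction coefficient $(p^{m - 1} - 1) - \sum_{i = 0}^{m - 1} p^i$ in front of $\NN_G$ is precisely what is required to cancel the overcount at non-identity elements while leaving the identity element with coefficient $p^{m - 1}$.
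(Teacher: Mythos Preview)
Your proof is correct and follows essentially the same approach as the paper: both reduce the identity to counting, for each $\sigma \in G$, the number of index-$p$ subgroups containing $\sigma$, and then compare coefficients. The only minor variation is that the paper counts hyperplanes through a nonzero vector via the trace-pairing duality with lines, whereas you use the correspondence theorem to identify them with hyperplanes of $G/\langle\sigma\rangle$; the resulting counts and the final assembly are the same.
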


\begin{proof}
Observe that $G$ is an $\mathbb{F}_p$-vector space and the (non-trivial) $H$ are exactly the $(m - 1)$-dimensional subspaces of $G$. Recall that the trace pairing
\[
\mathbb{F}_p^m \times \mathbb{F}_p^m \to \mathbb{F}_p, \quad (v, w) \mapsto \sum_{i = 1}^m v_i w_i
\]
is perfect, hence induces a bijection between $(m - 1)$-dimensional and 1-dimensional subspaces. 
The number of 1-dimensional subspaces is exactly $\frac{p^{m} - 1}{p - 1}$, 
hence $| \Omega^\ast \setminus \{ G \} |$ is equal to $ \frac{p^{m} - 1}{p - 1}$.
If we fix $v \in \mathbb{F}_p \setminus \{ 0 \}$, then the set of all $(m - 1)$-dimensional subspaces of $\mathbb{F}_p^m$ that contain $v$ is in bijection with all 1-dimensional subspaces of the space 
$\{ w \in \mathbb{F}_p^m \mid \sum_{i = 1}^m v_i w_i = 0 \}$,
the kernel of the $(1 \times m)$-matrix $v$. This space is therefore of dimension $m - 1$ and contains $\frac{p^{m - 1} - 1}{p - 1}$ subspaces of dimension one. 
That is, there are exactly $\frac{p^{m - 1} - 1}{p - 1}$ subgroups $H \in \Omega^\ast \setminus \{ H \}$ that contain a given (non-trivial) element of $G$. It follows that there are exactly 
\begin{align*}
\frac{p^{m} - 1}{p - 1} - \frac{p^{m - 1} - 1}{p - 1} & = \frac{(p^m - 1) - (p^{m - 1} - 1)}{p - 1} = \frac{p^{m - 1} ( p - 1)}{(p - 1)} = p^{m - 1} 
\end{align*}
such $H$ that do \textit{not} contain a given (non-trivial) element. Thus, each element of $G$ appears in the sum $( \sum_{H \in \Omega^\ast \setminus \{ G \}} \NN_H \Big) + p^{m - 1} ( N_G - 1)$
exactly $| \Omega^\ast \setminus \{ G \} |$ many times. From this we get
\begin{align*}
\Big ( \sum_{H \in \Omega^\ast \setminus \{ G \}} \NN_H \Big) + p^{m - 1} ( N_G - 1) &
= | \Omega^\ast \setminus \{ G \} | \cdot \NN_G  = \frac{(p^{m} - 1)}{p - 1} \cdot \NN_G = \Big ( \sum_{i = 0}^{m - 1} p^i \Big) \cdot \NN_G. \qedhere
\end{align*}\end{proof}

For any integer $r \geq 0$ and $H\in\Omega^\ast$, we consider the injection
\[
\nu_H \: \C \otimes_\Z \exprod^r_{\Z [G / H]} \bigO_{K^H, S, T}^\times \to \C \otimes_\Z \exprod^r_{\Z [G]} \bigO_{K, S, T}^\times, \quad a \mapsto |H|^{\mathrm{max}\{0, 1 - r\}} \cdot a
\]
that satisfies
\begin{equation} \label{injection and norms}
\nu_H ( \NN_H^r a) = \NN_H a \quad \text{ for any } a \in \C \otimes_\Z \exprod^r_{\Z [G]}  \bigO_{K, S, T}^\times.
\end{equation}
As a straightforward application of Lemma \ref{algebraic-lemma} we obtain the following consequence
that recovers \cite[Prop.\@ 4.5]{Sands} in the case $p = 2$. 

\begin{prop} \label{prop0} In $\R \otimes_\Z \exprod^r_{\Z [G]} \bigO_{K, S, T}^\times$ we have the equality
\[
\varepsilon^V_{K / k, S, T} = \frac{1}{p^{m - 1}} \cdot \Big (  \sum_{H \in \Omega^\ast} \nu_H \big( \varepsilon_{K^H / k, S, T}^V \big)  + \Big ( (p^{m - 1}  - 1) - \Big (\sum_{i = 0}^{m - 1} p^i \Big) \Big) \cdot \nu_G \big ( \varepsilon_{k / k, S, T}^V \big) \Big).
\]

\end{prop}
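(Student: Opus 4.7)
The plan is to derive the formula directly from the algebraic identity established in Lemma \ref{algebraic-lemma}, combined with the standard norm-compatibility relations satisfied by Rubin--Stark elements along subextensions.

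Since the identity in Lemma \ref{algebraic-lemma} is one between elements of $\ZZ[G]$, and the ring $\ZZ[G]$ acts naturally on $\R\otimes_\Z\exprod^{|V|}_{\Z[G]}\bigO_{K,S,T}^\times$, one may apply both sides of this identity to $\varepsilon^V_{K/k,S,T}$. After dividing through by $p^{m-1}$, the formula of the proposition reduces to establishing the norm-compatibility
\begin{equation}\label{normcomp}
\NN_H\cdot\varepsilon^V_{K/k,S,T}\;=\;\nu_H\bigl(\varepsilon^V_{K^H/k,S,T}\bigr)
\end{equation}
inside $\R\otimes_\Z\exprod^{|V|}_{\Z[G]}\bigO_{K,S,T}^\times$ for every $H\in\Omega^\ast$ (the case $H=G$ contributing both to the sum over $\Omega^\ast$ and to the $\nu_G(\varepsilon^V_{k/k,S,T})$-term).

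To prove (\ref{normcomp}) for fixed $H$, I would apply the regulator isomorphism $\exprod^{|V|}\lambda_{K,S}$ to both sides. For the left-hand side, the defining identity of $\varepsilon^V_{K/k,S,T}$ yields
\[
\NN_H\cdot\theta^{(|V|)}_{K/k,S,T}(0)\cdot\bigwedge_{1\le i\le|V|}(w_i-w_0).
\]
For the right-hand side, one combines the defining identity for $\varepsilon^V_{K^H/k,S,T}$ (which uses $\exprod^{|V|}\lambda_{K^H,S}$ and the places $w_i^H\coloneqq w_i|_{K^H}$) with the compatibility of the Dirichlet regulator under the inclusion $\bigO_{K^H,S}^\times\hookrightarrow\bigO_{K,S}^\times$ and the injection $w^H\mapsto\sum_{w'\mid w}w'$ of $X_{K^H,S}$ into $X_{K,S}$. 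These two maps agree with the regulators up to an overall factor of $|H|$ which is precisely absorbed by the normalising constant $|H|^{\max\{0,1-|V|\}}$ built into the definition of $\nu_H$; this mechanism is made transparent by the identity (\ref{injection and norms}).

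Matching the two outputs then reduces (\ref{normcomp}) to the purely analytic identity
\[
\NN_H\cdot\theta^{(|V|)}_{K/k,S,T}(0)\;=\;|H|\cdot\sum_{\substack{\chi\in\widehat{G}\\ H\subseteq\ker\chi}}L^\ast_{k,S,T}(\chi^{-1},0)\cdot e_\chi,
\]
which follows from the orthogonality relations $e_\chi\NN_H=|H|\cdot e_\chi$ when $H\subseteq\ker\chi$ and $e_\chi\NN_H=0$ otherwise, together with the fact that $L_{k,S,T}(\chi,s)$ depends only on $\chi$ regarded as a character of the absolute Galois group of $k$. The main obstacle is the careful bookkeeping of the various $|H|$-factors arising from the comparison of the two Dirichlet regulators and from the passage between the $\Z[G/H]$- and $\Z[G]$-exterior powers; however, these are by design precisely the factors encoded in $\nu_H$, and the $p=2$ instance of the argument is already carried out in detail in \cite[Prop.\@ 4.5]{Sands}, which transfers to the present setting without essential modification.
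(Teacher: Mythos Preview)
Your proof is correct and follows the same strategy as the paper: apply the identity of Lemma \ref{algebraic-lemma} to $\varepsilon^V_{K/k,S,T}$ and then use the norm-compatibility $\NN_H\cdot\varepsilon^V_{K/k,S,T}=\nu_H(\varepsilon^V_{K^H/k,S,T})$ for each $H\in\Omega^\ast$. The only difference is that the paper obtains this compatibility in one line by combining (\ref{injection and norms}) with the standard norm relation $\NN_H^{|V|}\varepsilon^V_{K/k,S,T}=\varepsilon^V_{K^H/k,S,T}$ from \cite[Prop.~6.1]{Rub96}, whereas you re-derive it from scratch via the regulator and the Stickelberger elements; your longer route is valid but unnecessary once Rubin's result is available.
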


\begin{proof}
 Using Lemma \ref{algebraic-lemma}\,(a), equation (\ref{injection and norms}), and the norm relations for Rubin--Stark elements \cite[Prop.~6.1]{Rub96} we calculate
\begin{align*}
p^{m - 1} \cdot \varepsilon_{K / k, S, T}^V & = \Big ( \sum_{H \in \Omega^\ast} \NN_H + \Big ( (p^{m - 1}  - 1) - \Big (\sum_{i = 0}^{m - 1} p^i \Big)\Big) \cdot \NN_G   \Big) \cdot \varepsilon_{K / k, S, T}^V \\
& = \Big ( \sum_{H \in \Omega^\ast} \nu_H \big ( \NN_H^{|V|} \varepsilon_{K / k, S, T}^V\big) \Big) + \Big ( (p^{m - 1}  - 1) - \Big (\sum_{i = 0}^{m - 1} p^i \Big) \Big) \cdot \nu_G \big (\NN_G^{|V|} \varepsilon_{K / k, S, T}^V\big)   \\
& =  \Big ( \sum_{H \in \Omega^\ast} \nu_H \big ( \varepsilon_{K^H / k, S, T}^V \big) \Big) + \Big ( (p^{m - 1}  - 1) - \Big (\sum_{i = 0}^{m - 1} p^i \Big) \Big) \cdot \nu_G \big (\NN_G^{|V|} \varepsilon_{k / k, S, T}^V \big).
\qedhere
\end{align*}
\end{proof}

To prepare for the proof of Theorem \ref{result 2}, we now first give a preliminary result in which we write $I_G \coloneqq \ker \{ \Z [G] \to \Z \}$ for the absolute augmentation ideal of $\Z [G]$ and, given a $\Z [G]$-module $M$ and non-negative integer $r$, define its `$r$-th exterior bidual' to be
\[
\bidual^r_{\Z [G]} M \coloneqq \big \{ a \in \Q \otimes_\Z \exprod^r_{\Z [G]} M \mid F (a) \in \Z [G] \text{ for all } F \in \exprod^r_{\Z [G]} \Hom_{\Z [G]} (M, \Z [G]) \big \}.
\]

\begin{lem} \label{main-result-2-RS-part}
Fix a Rubin datum $(S, V, T)$ for $K / k$ and a non-negative integer $c$ that satisfies 
\[
|S| \geq \max \{ |V| + 2, |V| - s_p + (p - 1)(m - 1) + 2 + c\},
\]
where $s_p \coloneqq \dim_{\mathbb{F}_p} (\Cl_{k, S, T} \otimes_\Z \mathbb{F}_p)$ denotes the $p$-rank of $\Cl_{k, S, T}$. 
If the equality (\ref{image RS}) holds for all extensions $L / k$ in $\Omega$, then $\varepsilon^V_{K / k, S, T}$ belongs to $I_G^c \cdot \bidual^{|V|}_{\Z [G]} \cO_{K, S, T}^\times$. 
\end{lem}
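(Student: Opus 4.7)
The plan is to exploit the decomposition in Proposition \ref{prop0}. Since the subgroups $H\subsetneq G$ of index $p$ correspond bijectively to the degree-$p$ subextensions $L=K^H$ in $\Omega$, the formula expresses $p^{m-1}\varepsilon^V_{K/k, S, T}$ as a $\Z$-linear combination of the elements $\nu_H(\varepsilon^V_{K^H/k, S, T})$ taken over these $H$, together with a correction term $\gamma_m\cdot\nu_G(\varepsilon^V_{k/k, S, T})$ for some integer $\gamma_m$ that is read off from Lemma \ref{algebraic-lemma}. Thus the hypothesis (\ref{image RS}) supplies detailed information about every summand in the main sum, and the task reduces to showing that for every $F\in\exprod^{|V|}_{\Z[G]}\Hom_{\Z[G]}(\cO^\times_{K, S, T}, \Z[G])$, the element $F(\varepsilon^V_{K/k, S, T})$ lies in $I_G^c$.

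Using Lemma \ref{alternative definition RS image lem}, the hypothesis yields that, for each relevant $H$, the element $\varepsilon^V_{K^H/k, S, T}$ lies in $\bidual^{|V|}_{\Z[G/H]}\cO^\times_{K^H, S, T}$ with image equal to $\Fitt^{|V|}_{\Z[G/H]}(\Sel_{K^H, S, T})^\#$. The central technical step is to prove that this Fitting ideal lies in a sufficiently deep power of $I_{G/H}$. Concretely, under the assumption $|S|\geq|V|-s_p+(p-1)(m-1)+2+c$, the goal is to establish an inclusion of the shape
\[
\Fitt^{|V|}_{\Z[G/H]}(\Sel_{K^H, S, T})\subseteq I_{G/H}^{c+(p-1)(m-1)}.
\]
This is achieved by combining the presentation of $C^\bullet_{K^H, S, T}$ furnished by Lemma \ref{WeLemma}\,(ii) with the canonical exact sequence defining $\Sel_{K^H, S, T}$ and a standard maximal-minor computation of the relevant Fitting ideal. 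The $p$-rank $s_p$ intervenes as an invariant of $\Cl_{k, S, T}$ that controls the rank of $(G/H)$-coinvariants of $\Sel_{K^H, S, T}$, which in turn governs the depth to which the presenting minors sit in $I_{G/H}$.

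With this depth estimate in hand, the identity $\nu_H(\varepsilon^V_{K^H/k, S, T})=N_H\cdot\varepsilon^V_{K/k, S, T}$ furnished by (\ref{injection and norms}) combined with the Rubin--Stark norm relations translates, via the compatibility of restriction along the surjection $\Z[G]\twoheadrightarrow\Z[G/H]$ (which maps $I_G$ onto $I_{G/H}$), into a bound of the form $F(N_H\varepsilon^V_{K/k, S, T})\in I_G^{c+(p-1)(m-1)}$ for every $F$ as above. The $H=G$ contribution is handled similarly via the analytic class-number formula for $k$, which pins down $\varepsilon^V_{k/k, S, T}$ explicitly. The `extra' exponent $(p-1)(m-1)$ is then precisely what is needed to accommodate the division by $p^{m-1}$, using the standard relation $p\cdot I_G^n\subseteq I_G^{n+1}$ valid for $G\cong(\Z/p)^m$, so that after summing all terms and invoking the torsion-freeness of $\bidual^{|V|}_{\Z[G]}\cO^\times_{K, S, T}$ we obtain $F(\varepsilon^V_{K/k, S, T})\in I_G^c$.

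The main obstacle is the Fitting-ideal depth estimate for $\Sel_{K^H, S, T}$. This requires a careful analysis of the presentation matrix coming from Lemma \ref{WeLemma}\,(ii), an identification of the rank of its reduction modulo $I_{G/H}$ in terms of the class-group $p$-rank $s_p$, and a clean bookkeeping of the $I_G$-adic filtration consumed when dividing by $p^{m-1}$; the precise numerical shape of the hypothesis on $|S|$ is designed to make all of these balances come out simultaneously.
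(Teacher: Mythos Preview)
Your overall strategy—decompose via Proposition~\ref{prop0} and control each $\varepsilon^V_{K^H/k,S,T}$ through $\Fitt^{|V|}_{\Z[G/H]}(\Sel_{K^H,S,T})$ using hypothesis~(\ref{image RS})—matches the paper's, but two steps fail as written.

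First, the relation $p\cdot I_G^n\subseteq I_G^{n+1}$ points the wrong way: it converts factors of $p$ into extra $I_G$-depth, not conversely, so from $p^{m-1}F(\varepsilon^V_{K/k,S,T})\in I_G^{c+(p-1)(m-1)}$ you cannot conclude $F(\varepsilon^V_{K/k,S,T})\in I_G^c$ (and for $c=0$ you do not even know this element lies in $\Z[G]$). The paper instead works in the \emph{cyclic} ring $\Z[G/H]$ and uses the opposite inclusion $I_{G/H}^{p}\subseteq p\,I_{G/H}$ to extract an honest factor of $p^{m-1}$ from the Fitting ideal: after computing $\Fitt^{|V|}_{\Z[G/H]}(\Sel^{\mathrm{tr}}_{K^H,S,T})=\Fitt^0_{\Z[G/H]}(\Cl_{K^H,S,T})\cdot I_{G/H}^{|S\setminus V|-1}$ and bounding the first factor via the surjection $\Cl_{K^H,S,T}\twoheadrightarrow(\Z/p)^{s_p}$, one obtains $\im(\varepsilon^V_{K^H/k,S,T})\subseteq p^{m-1}I_{G/H}^{1+c}$. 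Then $\nu_H$ carries $p^{m-1}I_{G/H}^c\cdot\bidual^{|V|}_{\Z[G/H]}\cO^\times_{K^H,S,T}$ into $p^{m-1}I_G^c\cdot\bidual^{|V|}_{\Z[G]}\cO^\times_{K,S,T}$, and the factor $p^{m-1}$ cancels against the coefficient in Proposition~\ref{prop0} by torsion-freeness of the bidual.

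Second, you reduce the goal to showing $F(\varepsilon^V_{K/k,S,T})\in I_G^c$ for all $F$, but this is strictly weaker than the assertion $\varepsilon^V_{K/k,S,T}\in I_G^c\cdot\bidual^{|V|}_{\Z[G]}\cO^\times_{K,S,T}$. The paper bridges this gap (at the level of each $K^H$) by embedding the bidual into the free module $\exprod^{|V|}_{\Z[G/H]}P_0$ via an exact sequence of Sakamoto, writing $\varepsilon^V_{K^H/k,S,T}=p^{m-1}(\sigma_H-1)^{1+c}a$ there, and then using cohomological triviality of $P_1\otimes_{\Z[G/H]}\exprod^{|V|-1}_{\Z[G/H]}P_0$ to show that $(\sigma_H-1)a$ already lies in the bidual. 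This is also why one extra power of $I_{G/H}$ (exponent $1+c$ rather than $c$) is needed in the Fitting-ideal bound.
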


\begin{proof}
At the outset we note that, for any $H \in \Omega^\ast$, the map $\nu_H$ restricts to an injection $\bidual^{|V|}_{\Z [G / H]} \cO_{K^H, S, T}^\times \to \bidual^{|V|}_{\Z [G]} \cO_{K, S, T}^\times$ (cf.\@ \cite[Rk.\@ 4.13]{BKS}). 
By Proposition \ref{prop0}, it is hence sufficient to prove that $\varepsilon^V_{K^H / k, S, T}$ belongs to $p^{m - 1} I_{G / H}^c \bidual^{|V|}_{\Z [G / H]} \cO_{K^H, S, T}^\times$ for every $H \in \Omega^\ast$. By the assumption $|S| \geq |V| + 2$, we may and will assume $K^H \neq k$ so that $K^H \in \Omega$.\\ 
We now first claim that for this purpose it is enough to prove that $\im (\varepsilon^V_{K^H / k, S, T})$ is contained in $p^{m - 1} I_{G / H}^{ 1 + c}$. 
To justify this, we apply Lemma \ref{WeLemma}\,(ii) to fix a representative $[P_0 \stackrel{\phi}{\to} P_1]$ of the complex $C^\bullet_{K^H, S, T}$ in $D^{\mathrm{p}}(\ZZ[G/H])$. From \cite[Lem.\@ B.6]{Sakamoto20} we then obtain an exact sequence
\begin{equation} \label{ryotaro exact sequence}
    \begin{tikzcd}
    0 \arrow{r} & \bidual^{|V|}_{\Z [G / H]} \cO_{K^H, S, T}^\times \arrow{r} & \exprod^{|V|}_{\Z [G / H]} P_0 \arrow{r}{\phi} & P_1 \otimes_{\Z [G / H]} \exprod^{|V| - 1}_{\Z [G/ H]} P_0. 
    \end{tikzcd}
\end{equation}%
In particular, we may view $\varepsilon^{V}_{K^H / k, S, T}$ as an element of $\exprod^{|V|}_{\Z [G / H]} P_0$. Now, if $\im (\varepsilon^V_{K^H / k, S, T})$, which equals $ \{F ( \varepsilon^{V}_{K^H / k, S, T}) \mid F \in \exprod^{|V|}_{\Z [G / H]} \Hom_{\Z [G]}(P_0, \Z [G]) \}$, is contained in $p^{m - 1} I_{G / H}^{1 + c}$, then $\varepsilon^{V}_{K^H / k, S, T}$ belongs to the module $p^{ m - 1}  I_{G / H}^{1 + c} \exprod^{|V|}_{\Z [G]} P_0$ (cf.\@ \cite[Prop.\@ 4.17]{BKS}). We may therefore write $\varepsilon^{V}_{K^H / k, S, T} = p^{m - 1} (\sigma_H - 1)^{ 1 + c} a$ with $\sigma_H$ a generator of $G / H$ and $a$ an element of $\exprod^{|V|}_{\Z [G]} P_0$. From the exact sequence (\ref{ryotaro exact sequence}) we then see that  
\[
p^{m - 1} (\sigma_H - 1)^{1 + c} \cdot \phi (a) = 
\phi ( p^{m - 1} (\sigma_H - 1)^{1 + c} a) = \phi ( \varepsilon^{V}_{K^H / k, S, T}) = 0.
\]
Since $P' \coloneqq P_1 \otimes_{\Z [G / H]} \exprod^{|V| - 1}_{\Z [G/ H]} P_0$ is $\Z$-torsion free, this implies that $(\sigma_H - 1)^{1 + c} \cdot \phi (a)$ vanishes. As $(\sigma_H - 1)P'$ and $(P')^{G / H} = \ker \{ P' \stackrel{\cdot(\sigma_H - 1)}{\longrightarrow} P' \}$ intersect trivially because $P'$ is $G/H$-cohomologically trivial, it then follows by induction on $c$ that $(\sigma_H - 1) \phi (a)$ vanishes. Exactness of (\ref{ryotaro exact sequence}) now shows that $(\sigma_H - 1) a$ belongs to $\bidual^{|V|}_{\Z [G / H]} \cO_{K^H, S, T}^\times$, as required to prove that $\varepsilon^V_{K^H, S, T}$ belongs to $p^{m - 1} (\sigma_H - 1)^c \bidual^{|V|}_{\Z [G / H]} \cO_{K^H, S, T}^\times$.\\ 
It now remains to prove that $\im (\varepsilon^V_{K^H / k, S, T})$ is contained in $p^{m - 1} I_{G / H}^{1 + c}$. We may and will 
assume that no place in $S \setminus V$ splits completely in $K^H / k$, since otherwise $\varepsilon_{K^H / k, S, T}^V$ vanishes. Thus, every place in $S \setminus V$ has full decomposition group in $K^H / k$.
Since we assume (\ref{image RS}) to hold for $K^H / k$ it is enough to prove, in this situation, that $\Fitt^{|V|}_{\Z [G / H]} (\Sel_{K^H, S, T})^\#\subseteq p^{m - 1}I_{G / H}^{1 + c}$. \\
To verify this inclusion, we use the `transpose' Selmer group defined in Lemma \ref{WeLemma}\,(i) and the equality 
\[
\Fitt^{|V|}_{\Z [G / H]}  (\Sel_{K^H, S, T})^\# = \Fitt^{|V|}_{\Z [G / H]} ( \Sel^\mathrm{tr}_{K^H, S, T})
\]
of \cite[Lem.\@ 2.8]{BKS}. It then suffices to verify that $\Fitt^{|V|}_{\Z [G / H]} ( \Sel^\mathrm{tr}_{K^H, S, T})\subseteq p^{m - 1} I_{G / H}^{1+c}$.\\ 
For this purpose, we first note that $Y_{K^H, V}$ is a free direct summand of $X_{K^H,S}\cong Y_{K^H, V} \oplus X_{K^H, S\setminus V}$, hence also of $\Sel^\mathrm{tr}_{K^H, S, T}$. We may thus find a $\Z [G / H]$-module $M$ such that $\Sel^\mathrm{tr}_{K^H, S, T} \cong M \oplus Y_{K^H, V}$ and one has the following modified version of the exact sequence in Lemma \ref{WeLemma}\,(i):
\begin{equation} \label{modified exact sequence}
\begin{tikzcd}
    0 \arrow{r} & \Cl_{K^H,S, T} \arrow{r} & M \arrow{r} & X_{K^H, S\setminus V} \arrow{r} & 0.
\end{tikzcd}
\end{equation}
Setting $d \coloneqq | S \setminus V|$, one has $X_{K^H, S \setminus V} \cong \Z^{d - 1}$ and fixing again a generator $\sigma_H$ of $G/H$,
 \[
 \Fitt^0_{\Z [G / H]} (X_{K^H, S \setminus V} ) = I_{ G / H}^{d - 1} = (\sigma_H - 1)^{d - 1} \Z [G / H].
 \]
In particular, $\Fitt^0_{\Z [G / H]} ( X_{K^H, S \setminus V})$ is a principal ideal and so we may apply \cite[Lem.\@ 2.5\,(ii)]{JoNi13} to the exact sequence (\ref{modified exact sequence})
to infer that
\begin{align*}
\Fitt^{|V|}_{\Z [G / H]} (\Sel^\mathrm{tr}_{K^H, S, T})= \Fitt^{0}_{\Z [G / H]} (M) & = \Fitt^0_{\Z [G/ H]} (  \Cl_{K^H,S, T}) \cdot \Fitt^0_{\Z [G / H]} ( X_{K^H, S \setminus V}) \\
& = \Fitt^0_{\Z [G/ H]} (  \Cl_{K^H,S, T}) \cdot I_{G / H}^{d - 1}. 
\end{align*}
Fix a place $v \in S \setminus V$ and recall that we may assume that $v$ has full decomposition group in $K^H / k$. If we write $H_{S, T} (K^H)$ and $H_{S, T} (k)$ for the $(S, T)$-ray class fields of $K^H$ and $k$, respectively, then $H_{S, T} (k) \cap K^H = k$ since $v$ splits completely in $H_{S, T} (k)$. Thus, we may identify $\gal{H_{S, T} (k)}{k} \cong \gal{K^H \cdot H_{S, T} (k)}{K^H}$ and hence the restriction map $\gal{H_{S, T} (K^H)}{K^H} \to \gal{H_{S, T} (k)}{k}$ is surjective. By class field theory, the restriction map corresponds with the norm map $\Cl_{K^H,S, T} \to \Cl_{k,S, T}$ and so, in particular, the map $\Cl_{K^H,S, T} \to \Cl_{k,S, T} \otimes_\Z \mathbb{F}_p \cong (\nZ{p})^{s_p}$ is surjective as well. This map is $G / H$-equivariant, thus we obtain an inclusion
\[
\Fitt^0_{\Z[G / H]} ( \Cl_{K^H,S, T}) \subseteq \Fitt^0_{\Z [G / H]} \big( (\Z /p \Z)^{s_p} \big) =  \prod_{i = 1}^{s_p} ( p \Z [G / H] + I_{G / H})
\subseteq \sum_{i = 0}^{s_p} p^i I_{G / H}^{s_p - i}.
\]
By the previous discussion, we therefore have an inclusion
\[
\Fitt^{|V|}_{\Z [G / H]} (\Sel^\mathrm{tr}_{K^H, S, T}) \subseteq \big ( \sum_{i = 0}^{s_p} p^i I_{G / H}^{s_p - i} \big) \cdot I_{G / H}^{d - 1} 
= \sum_{i = 0}^{s_p} p^i I_{G / H}^{s_p - i + d - 1}
\subseteq \big ( \sum_{i = 0}^{s_p} p^i I_{G / H}^{s_p - i + d - c - 1} \big) \cdot I_{G / H}^c.
\]
Since $\sigma_H$ is of order $p$, we have $(\sigma_H - 1)^p \equiv \sigma_H^p - 1 = 0 \mod p$ and so $(\sigma_H - 1)^p$ is divisible by $p$ in $\Z [G / H]$. Noting that the quotient $\Z [G / H] / I_{G / H} \cong \Z$ is torsion-free, we see that $(\sigma_H - 1)^p$ is in fact divisible by $p (\sigma_H - 1)$. From this it follows that $(\sigma_H - 1)^{s_p - i + d -c - 1}$ is divisible by $p^{\max \{ 0, \lfloor (s_p - i + d - c - 2) / (p -1) \rfloor\}} (\sigma_H - 1)$. As a consequence, 
\[
\sum_{i = 0}^{s_p} p^i I_{G / H}^{s_p - i + d - c - 1}
\subseteq 
\sum_{i = 0}^{s_p} p^{i + \lfloor (s_p - i + d - c - 2) / (p -1) \rfloor} I_{G / H}
\subseteq p^{\lfloor (s_p + d - c - 2) / (p -1) \rfloor} I_{G / H}, 
\]
where we have used that
\[
i + \lfloor \frac{s_p - i + d - c - 2}{p -1} \rfloor = \lfloor \frac{(p - 1) i + s_p - i + d - c - 2}{p -1} \rfloor \geq \lfloor \frac{s_p + d - c - 2}{p -1} \rfloor
\]
as a consequence of $ p -1 \geq 1$. Now,
\[
\frac{d + s_p - c - 2}{p -1} = 
\frac{|S| - |V| + s_p - c - 2}{p -1} \geq  m - 1
\iff 
|S| \geq  |V| - s_p + (p - 1) ( m - 1) + 2 + c
\]
and so $\Fitt^{|V|}_{\Z [G / H]} (\Sel^\mathrm{tr}_{K^H, S, T})$ is contained in $p^{m - 1} I_{G / H}^{1 + c}$ as soon as $|S| \geq  |V| - s_p + (p - 1) ( m - 1) + 2 + c$.
This concludes the proof that $\im (\varepsilon^V_{K / k, S, T})$ is contained in $p^{m - 1} I_{G / H}^{1 + c}$, as required.
\end{proof}

We can now give the proof of Theorem \ref{result 2}.
\medskip \\
\textit{Proof (of Theorem \ref{result 2}):}
Write $H_{k, p}$ and $H_K$ for the extensions of $k$ and $K$ that correspond with $\Cl_{k, S, T} \otimes_\Z \mathbb{F}_p$ and $\Cl_{K, S, T}$ via class field theory. That is, $H_{k, p}$ is the maximal $p$-elementary extension of $k$ that is unramified outside $T$ and in which all places in $S$ split completely, and $H_K$ is that maximal extension of $K$ that is unramified outside $T_K$ and in which all places in $S_K$ split completely. Note that $H_K$ is Galois over $k$. By Cebotarev's Density Theorem, we may then choose a finite set $W$ of prime ideals of $k$ that has all of the following properties:
\begin{enumerate}[label=(\roman*)]
    \item $W$ is disjoint from $S \cup T$,
    \item every place in $W$ splits completely in $K \cdot H_{k, p}$,
    \item $\{ \Frob_\p \mid \p \in W \}$ is a generating set for $\gal{H_K}{ K \cdot H_{k, p}}$.
\end{enumerate}
In particular, one has $\Cl_{k, S', T} \otimes_\Z \Z_p = \Cl_{k, S, T} \otimes_\Z \mathbb{F}_p$ with $S' \coloneqq S \cup W$.
Class field theory then provides for a commutative diagram
\begin{cdiagram}
\Cl_{K, S', T} \arrow{r}{\simeq} \arrow{d}{\widetilde{\NN}_{K / k}} & \gal{H_{k, p} K}{K} \arrow[hookrightarrow]{d} \\
\Cl_{k, S, T} \otimes_\Z \mathbb{F}_p \arrow{r}{\simeq} & \gal{H_{k, p}}{k}, 
\end{cdiagram}%
where the right hand vertical arrow is the natural restriction map and $\widetilde{\NN}_{K / k}$ is the composite of the `norm' map $\Cl_{K, S', T} \to \Cl_{k, S', T}$ induced by the norm $\NN_{K / k} \: K^\times \to k^\times$ and the projection 
$ \Cl_{k, S', T} \to \Cl_{k, S, T} \otimes_\Z \mathbb{F}_p$. As a consequence, we obtain a $G$-equivariant isomorphism $\Cl_{K, S', T} \cong \widetilde{\NN}_{K / k} ( \Cl_{K, S', T})$,
and hence an exact sequence of $\Z [G]$-modules
\begin{equation} \label{free-resolution}
\begin{tikzcd}[column sep=small]
0 \arrow{r} & \cO_{K, S, T}^\times 
\arrow{r} & \cO_{K, S', T}^\times \arrow{r}{\psi} & Y_{K, W} \arrow{r} & 
\Cl_{K, S, T} \arrow{r} & \widetilde{\NN}_{K / k} ( \Cl_{K, S', T}) \arrow{r} & 0
\end{tikzcd}
\end{equation}
with $\psi \: \cO_{K, S', T}^\times \to Y_{K, W}$ the map  $ a \mapsto \sum_{w \in W_K} \ord_w (a) w$. 
Fix a labelling $W = \{ v_{|S| + 1}, \dots, v_{|S'|}\}$ and, for each $i \in \{ |S| + 1, \dots, |S'|\}$, an extension $w_i$ of $v_i$ to $K$. 
By condition (ii) every place of $K$ above a fixed $w_i$ is of the form $\sigma w_i$ for some $\sigma \in G$, which allows us to define a map $w_i^\ast \: Y_{K, W} \to \Z [G]$ by sending $\sum_{w \in W_K} a_w w$ to $\sum_{\sigma \in G} a_{\sigma w} \sigma$ (so $w_i^\ast$ is the `dual' of $w_i$).  
Setting $A \coloneqq \ker \{ \Cl_{K, S', T} \to \widetilde{\NN}_{K / k} ( \Cl_{k, S, T}) \}$, the exact sequence (\ref{free-resolution}) then implies that
\[
\bigwedge_{i = |S| + 1}^{|S'|} (w_i^\ast \circ \psi) ( \cO_{K, S', T}^\times) \subseteq \Ann_{\Z [G]} (A).
\]
We now claim that $\im (\varepsilon^V_{K / k, S, T})$ is contained in $I_G$ times the intersection on the left hand side. 
To do this, we first note that 
$s'_p \coloneqq \dim_{\mathbb{F}_p} ( \Cl_{k, S', T} \otimes_\Z \mathbb{F}_p)$ is equal to $s_p$ because $\Cl_{k, S', T} \otimes_\Z \mathbb{F}_p = \Cl_{k, S, T} \otimes_\Z \mathbb{F}_p$ by condition (ii). 
Setting $V' \coloneqq V \cup W$, it then follows that \begin{align*}
|S'|  = |W| + |S| 
& \geq |W| + \max \{ |V| + 2, |V| - s_p + (p - 1)(m - 1) + 3 \}\\
& \geq \max \{ |V'| + 2, |V'| - s'_p + (p - 1)(m - 1) + 3 \}.
\end{align*}
By Lemma \ref{main-result-2-RS-part}, we therefore have that 
$\varepsilon^{V'}_{K / k, S', T}$ belongs to $I_G \cdot \bidual^{|V'|}_{\Z [G]} \cO_{K, S', T}^\times$, hence can be written as $\varepsilon^{V'}_{K / k, S', T} = \sum_{i = 1}^t x_i a_i$ with a natural number $t$ and elements $x_1, \dots, x_t \in I_G$ and $a_1, \dots, a_t \in \bidual^{|V'|}_{\Z [G]} \cO_{K, S', T}^\times$. 
Set $\psi_l \coloneqq w_l^\ast \circ \psi$ and, for every
$f \in \exprod^{|V|}_{\Z [G]} (\cO_{K, S, T}^\times)^\ast$, define  a map 
\[
\Phi_{j, f} \coloneqq \R \otimes_\Z \exprod^{|V'|}_{\Z [G]} \cO_{K, S', T}^\times 
\to \R \otimes_\Z \cO_{K, S, T}^\times, \quad 
a \mapsto (f \circ \exprod_{\substack{|S| + 1 \leq l \leq |S'| \\ l \neq j}} \psi_j) ( a). 
\]
For every $g \in (\cO_{K, S, T}^\times)^\ast$ and $i\in \{1, \dots, t\}$, one then has that $(g \circ \Phi_{j, f}) ( a_i)$ belongs to $\Z [G]$. This  shows that
\[
\Phi_{j, f} (a_i) \in
\big \{ a \in \R \otimes_\Z \cO_{K, S', T}^\times \mid g (a) \in\Z [G] 
\text{ for all } g \in (\cO_{K, S, T}^\times)^\ast \} = \cO_{K, S, T}^\times
\]
because $\cO_{K, S, T}^\times$ is $\Z$-torsion free. 
For any $f \in \exprod^{|V|}_{\Z [G]} (\cO_{K, S, T}^\times)$ and $j \in \{ |S| + 1, \dots, |S'| \}$, we obtain that
\begin{align*}
    ( f \circ \big( \exprod_{|S| + 1\leq l \leq |S'|} \psi_l \big) \big) (a_i)
    = \pm (\psi_j \circ \Psi_{j, f}) ( a_i) \subseteq \psi_j ( \cO_{K, S, T}^\times).
\end{align*}
By the above discussion, this shows that $ ( f \circ \big( \exprod_{|S| + 1\leq l \leq |S'|} \psi_l \big) \big) (a_i)$ belongs to $\Ann_{\Z [G]} (A)$ for every $i$.
Now, by \cite[Prop.\@ 3.6]{Sano} (see also \cite[Prop.\@ 5.2]{Rub96}) one has 
\[
\big( \exprod_{|S| + 1\leq l \leq |S'|} \psi_l \big) (\varepsilon^{V'}_{K / k, S', T}) = \pm \varepsilon^V_{K / k, S, T}
\]
and so, for any $f \in \exprod^{|V|}_{\Z [G]} (\cO_{K, S, T}^\times)$, we deduce that
\begin{align*}
    f ( \varepsilon^V_{K / k, S, T})
    & = \pm \big( f \circ \big( \exprod_{|S| + 1\leq l \leq |S'|} \psi_l \big) \big) ( \varepsilon^{V'}_{K / k, S', T}) 
     = \pm \sum_{i = 1}^t x_i \cdot \big( f \circ \big( \exprod_{|S| + 1\leq l \leq |S'|} \psi_l \big) \big) (a_i) \\
    & \subseteq I_G \cdot \Ann_{\Z [G]} (A).
\end{align*}
As $\widetilde{\NN}_{K / k} ( \Cl_{K, S, T})$ (which carries the trivial $G$-action) is annihilated by $I_G$, we conclude from the tautological exact sequence 
\begin{cdiagram}
0 \arrow{r} & A \arrow{r} & \Cl_{K, S, T} \arrow{r} & \widetilde{\NN}_{K / k} ( \Cl_{K, S, T}) \arrow{r} & 0
\end{cdiagram}%
 that any element in $\im (\varepsilon^V_{K / k, S, T})$ annihilates $\Cl_{K, S, T}$, as required to prove Theorem \ref{result 2}.
\qed

\enlargethispage{0.5cm}
\printbibliography

\footnotesize

\textsc{King's College London,
Department of Mathematics,
London WC2R 2LS,
UK} \\
\textit{Email address:} \href{mailto:dominik.bullach@kcl.ac.uk}{dominik.bullach@kcl.ac.uk}
\smallskip\\ 
\textsc{Departamento de Matemáticas, Universidad Autónoma de Madrid, 28049 Madrid, Spain;
and Instituto de Ciencias Matemáticas, 28049 Madrid, Spain}\\
\textit{Email address:} \href{mailto:daniel.macias@uam.es}{daniel.macias@uam.es}

\end{document}